\def\R {\mathbb{R}}
\def\eps{\varepsilon}
\def\MA {Monge-Amp\`ere }
\def\Shcpsi {S_h^c[\psi]}
\def\Shcu {S_h^c[u]}
\def\Shu{S_h[u]}
\def\Shcv {S_h^c[v]}
\def\LTO{L_{\Omega_1}}
\def\RTO{R_{\Omega_1}}
\def\LTT{L_{\Omega_2}}
\def\RTT{R_{\Omega_2}}
\def\DDX{\frac{\partial}{\partial x_1}}
\def\DDY{\frac{\partial}{\partial x_2}}
\def\EL{e_{long}}
\newtheorem{prop}{Proposition}[section]
\newtheorem{thm}{Theorem}[section]
\newtheorem{lem}{Lemma}[section]
\theoremstyle{definition}
\newtheorem{defi}{Definition}[section]
\newtheorem{rem}{Remark}[section]
\numberwithin{equation}{section}
\title{Regularity of optimal transport between planar convex domains}
\author{Ovidiu Savin}
\address{Department of Mathematics,	Columbia University, New York, USA}
\email{savin@math.columbia.edu}
\author{Hui Yu}
\address{Department of Mathematics,	Columbia University, New York, USA}
\email{ huiyu@math.columbia.edu}
\thanks{O.~S.~is supported by  NSF grant DMS-1500438.}
\begin{document}

\begin{abstract}
For $0<p<+\infty$, we prove a global $W^{2,p}$-estimate for potentials of optimal transport maps between convex domains in the plane.  Among the tools developed for that purpose are obliqueness  in general convex domains and estimates for the growth of eccentricity of sections of the potentials. 
\end{abstract}

\maketitle

\section{Introduction}
Given domains $U_1$ and $U_2$ in $\R^d$ with the same volume, the optimal transport, in its most basic form, is a map $T:U_1\to U_2$ that minimizes the cost of transportation $$\int_{U_1}|Tx-x|^2dx$$ over all measure preserving maps $T$ from $U_1$ to $U_2$.   Besides its intrinsic interest, the past two decades witnessed an almost explosive amount of applications of this theory to probability, geometry, PDEs, and many other branches of mathematics which  a priori do not seem related, see for example the book by Villani \cite{V}.  

Part of this popularity is due to  the pioneering work by Brenier \cite{B}, which contains a very flexible existence theory as well as the fact that $T=\nabla \psi$ for some convex function $\psi$, which is often called the potential of $T$. The regularity of this map turns out to be much more delicate. Though partial regularity can be established for general domains (see \cite{DPF},\cite{FK},\cite{GO}), it was observed by Caffarelli \cite{CMap1} that even for the continuity of $T$, convexity of $U_1$ and $U_2$ becomes necessary.  Under this convexity assumption  it was proved in the same work that the map is smooth in the \textit{interior} of $U_1$, following from the key observation that the potential $\psi$ solves the \MA  equation \begin{equation}\det(D^2 \psi)=\chi_{U_1} \text{ in $\R^d$}\end{equation} in the Alexandrov sense, which makes the theory developed by Caffarelli in \cite{CMA1}, \cite{CMA2}, \cite{CMA3} applicable.  Convexity of the domains and (1.1) also imply the doubling property of the \MA measure of $\psi$ at $\partial U_1$. Exploiting this,  Caffarelli proved in \cite{CMap2} that $T$ is  H\"older continuous \textit{up to the boundary} of $U_1$ for some small H\"older exponent $\delta>0$. 

For general dimensions $d$, to go beyond this global $C^{\delta}$-estimate seems to require more regularity of the domains, since at points near $\partial U_1$ the sections/ level sets of $\psi$ are heavily influenced by the geometry of the boundary. For $C^2$ and uniformly convex domains, Caffarelli showed in \cite{CMap3} that $T \in C^{1,\alpha}$ up to the boundary. Independently, Urbas \cite{U} obtained the same result under the slightly stronger $C^3$ condition on the domains. Very recently the regularity assumptions on the two domains were weakened by  Chen-Liu-Wang \cite{CLW}  to $C^{1,1}$ and convexity. 

The purpose of this work is to show that in the plane one can go beyond $C^{\delta}$ without  further assumption on the domains other than convexity. To be precise, our main result is the following: 
\begin{thm}
Let $U_1$ and $U_2$ be bounded convex domains in $\R^2$ of area 1, and let $\psi$ be the potential for the optimal transport between $U_1$ and $U_2$. 

Given $\eps>0$, we have
$$\|D^2\psi(x)\| \le C(\eps) \, \, dist(x,\partial U_1)^{-\eps}, \quad \quad \forall x \in U_1,$$ for some constant $C(\eps)$ depending on $\eps$ and the maximal diameters of $U_1$ and $U_2$. 

In particular, given any $p<\infty$, $$\|D^2\psi\|_{\mathcal{L}^p(\overline{U_1})}\le C(p)$$ for some $C(p)$ depending only on $p$ and the diameters of $U_1$ and $U_2.$
\end{thm}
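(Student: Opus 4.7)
The plan is to reduce the pointwise Hessian bound to an eccentricity estimate for sections of $\psi$, and then prove the eccentricity estimate by a slow-growth-per-scale iteration that alternates between $\psi$ and its Legendre dual $\varphi$, using obliqueness on general planar convex domains.

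\emph{Step 1: From $D^2\psi$ to eccentricity of sections.} The key relation, standard for solutions of $\det D^2\psi = 1$, is that the norm of the Hessian at $x$ is controlled by the eccentricity $e(\Shpsi(x))$ of a section of $\psi$ at $x$: after affine normalization the section is comparable to the unit disk, the normalized potential satisfies $\det = 1$ on a balanced domain, and Caffarelli's interior theory bounds its Hessian; undoing the normalization produces exactly a factor equal to the eccentricity. Thus $\|D^2\psi(x)\| \lesssim e(\Shpsi(x))$ for any $h$ at which the section is well-defined in $U_1$. Choosing $h$ comparable to a power of $r = \mathrm{dist}(x,\partial U_1)$ and using that sections at $x$ at this scale are comparable (up to translation) to boundary sections at a nearby point $y \in \partial U_1$, the problem reduces to showing
\begin{equation*}
e(\Shpsi(y)) \le C(\eps)\, h^{-\eps} \qquad \text{for all } y \in \partial U_1, \; h \in (0, h_0).
\end{equation*}

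\emph{Step 2: Slow growth across scales.} The main technical estimate I would establish is a dyadic inequality
\begin{equation*}
e(S_{\lambda h}[\psi](y)) \le (1+\delta)\, e(\Shpsi(y))
\end{equation*}
for a fixed ratio $\lambda \in (0,1)$ and $\delta = \delta(\lambda)$ that tends to $0$ as $\lambda \to 1$. Iterating over $N \sim \log(1/h)/\log(1/\lambda)$ scales gives $e(\Shpsi(y)) \lesssim h^{-\log(1+\delta)/\log(1/\lambda)}$; taking $\lambda$ close enough to $1$ makes the exponent smaller than any prescribed $\eps$. The essential mechanism for the per-scale inequality is the duality $\psi \leftrightarrow \varphi$: in two dimensions, since $D^2\varphi$ has reciprocal eigenvalues to $D^2\psi$, the eccentricity of a section of $\varphi$ at $T(y) \in \partial U_2$ coincides with that of the corresponding section of $\psi$ at $y$. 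Obliqueness, which the authors establish for general convex domains earlier in the paper, ensures this correspondence is nondegenerate. Alternating between $\psi$ and $\varphi$ at each scale, together with doubling of the Monge-Amp\`ere measure and a normalization/compactness argument on planar convex sections, should produce the small increment $\delta$ per scale.

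\emph{Step 3: Main obstacle and conclusion.} The delicate point is the slow-growth inequality near flat boundary pieces or corners of $\partial U_1$ or $\partial U_2$, where sections degenerate and eccentricities threaten to jump. The key saving feature, specific to the plane, is that after affine normalization boundary sections are convex figures with controlled combinatorial structure (close to triangles or quadrilaterals), so the family of possible normalized configurations at each scale is precompact. A continuity argument on this compact family, combined with obliqueness to rule out limiting degenerations, should deliver a $\delta$ that vanishes as $\lambda \to 1$. Once $\|D^2\psi(x)\| \le C(\eps) r^{-\eps}$ is established, the $L^p$-bound follows by integrating $r^{-\eps p}$ over $U_1$ with $\eps < 1/p$, which is finite since the distance function has a one-dimensional zero set along $\partial U_1$.
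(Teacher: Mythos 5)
Your high-level strategy matches the paper's: reduce the Hessian bound at $x$ to an eccentricity bound on boundary sections via interior Pogorelov estimates, then prove eccentricity grows slower than any power of $h$ by a per-scale gain plus iteration, and integrate $\mathrm{dist}^{-\eps p}$ at the end for the $L^p$ statement. That outline is correct, and your observations about the $E_h \leftrightarrow E_h^\perp$ duality and about why the $L^p$ integral converges are right.

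However, Steps 2 and 3 leave the actual content of the paper as hand-waving, and there is a quantitative issue with the way you set up the iteration. You propose $e(S_{\lambda h}) \le (1+\delta)\,e(S_h)$ with $\lambda \to 1^-$ and $\delta(\lambda)\to 0$, and claim the exponent $\log(1+\delta)/\log(1/\lambda) \approx \delta/(1-\lambda)$ can be made arbitrarily small. But $\delta \to 0$ as $\lambda \to 1$ alone does not give this; you would need $\delta(\lambda) = o(1-\lambda)$, and nothing in your sketch produces such a rate. The paper's scaling is set up in the opposite direction, and this is what makes the argument close: one proves $\eta_u(\tfrac{1}{M}h) \le C_0 \eta_u(h)$ for a \emph{fixed} constant $C_0 = C_0(\bar\delta)$ coming from Pogorelov, valid once $\eta_u(h)$ exceeds a threshold $\eta_0(M,\bar\delta)$. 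Since $C_0$ is independent of $M$, the exponent $\log_M C_0$ is driven to zero simply by taking $M$ large; no rate estimate on $\delta$ is needed. The second, larger gap is that you never explain \emph{how} to get the per-scale gain. The crux of the paper is a dichotomy: either the long axis $e_{long}(E_h)$ points transversally into $\Omega_1$ (or $e_{long}(E_h^\perp)$ into $\Omega_2$), in which case the normalized domain is nearly a half-plane and Pogorelov applies directly (Proposition 4.2); or both long axes are nearly tangent to the respective domains, which by obliqueness forces the perpendicular corner configuration $L_{\Omega_1}(0) \perp R_{\Omega_2}(0)$, a rigid geometry handled by a separate maximum-principle/Pogorelov argument (Lemma 4.3). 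Obliqueness is what rules out the two long axes being tangent ``from the same side'' and thus pins down exactly which degenerate limits can occur; its role is much more specific than ``ensuring the correspondence is nondegenerate.'' Finally, passing from the per-pair statement to a uniform one requires the compactness of the normalized family $\mathcal S(\bar\delta)$ (Lemmas 5.1--5.2), which you allude to in passing but do not set up; in particular the threshold at which the per-scale gain kicks in depends a priori on the pair $(u,v)$, and removing that dependence is a genuine step.
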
 
This gives global $C^{\alpha}$-regularity of the optimal transport for any $\alpha\in(0,1)$.

To see why such a global estimate can be quite subtle, one might draw a comparison with the  Dirichlet problem as in Wang \cite{Wang}, Trudinger-Wang \cite{TW}, and more recent works of the first-named author \cite{Savin1}, \cite{Savin2}. In all these works, strong regularity of the boundary ($C^2$ or $C^3$) are needed to tame the influence of the boundary on the geometry of sections. To get estimates in very rough domains as in our case requires new ideas, and these ideas and tools developed here will hopefully prove valuable for future study of \MA equation in domains with low regularity. 

Unlike the Dirichlet problem, the natural boundary condition for our problem is the so-called second boundary condition, namely, \begin{equation}
\nabla \psi (\partial U_1)=\partial U_2.
\end{equation}  It was observed by Caffarelli \cite{CMap3} and Urbas \cite{U} that in smooth domains this condition implies obliqueness, that is, the angle between the normal at $x_0\in\partial U_1$ and the normal at $\nabla \psi(x_0)\in\partial U_2$ is uniformly bounded away from $\pi/2$. Thus up to an affine transformation, $\partial U_1$ and $\partial U_2$ cut the sections in the same direction at corresponding points. 

To get obliqueness for general convex domains, one first needs a replacement for normal vectors at non-differentiable points at the boundary. Our choice is the left and right tangent rays, which are respectively the critical supporting rays to the domain in the clockwise and counter-clockwise direction. The precise definition is given in the third section.   In the same section, it is shown that at corresponding points, the angle between these tangent rays are bounded away from $\pi/2$.

Together with the duality between the sections of $\psi$ and of the potential of the inverse of $T$, this obliqueness leads to a growth control over the eccentricity of the sections of $\psi$, which is equivalent to a pointwise $C^{1,\alpha}$ estimate, for any $\alpha \in (0,1)$. The main result follows by compactness by applying such an argument to a family of \textit{normalized solutions}, depending only on the inner and outer radii of the domains.

This paper is organized as follows. In the second section the reader can find some preliminary results and definitions that are used throughout the paper. In particular we introduce a compact family of solutions $\mathcal S(\bar \delta)$ that contain our potentials as well as their renormalizations. Most of the estimates are written in terms of the geometry of the sections of the potentials belonging to this family.  In the third section is the proof for obliqueness, which is used in the fourth section to control the growth of the eccentricity. In the last section we combine all these ingredients and give the proof of the main result.

\section{Sections, ellipses and the family of normalized solutions}
To simplify certain statements, we first introduce some geometric notions.

Given an ellipse $E$, we write $E=x_0+\{\lambda e_{short}+\Lambda e_{long}\},$ where $x_0$ is the centre,  $\lambda$ and $\Lambda$ denote the lengths of the long and short axises, and $e_{short}$ and $e_{long}$ are the directions of the corresponding axises. $E^\perp$ denotes the perpendicular ellipse, namely, $E^\perp=x_0+\{\Lambda e_{short}+\lambda e_{long}\}.$
  
The shape of an ellipse is described by the following quantity: 
\begin{defi}
Given an ellipse $E=x_0+\{\lambda e_{short}+\Lambda e_{long}\},$ its eccentricity is defined to be the ratio between its long axis and short axis, namely, $$\eta(E)=\Lambda/\lambda.$$
\end{defi}

\begin{defi}
Given two vectors $v_1$ and $v_2$, we use $\omega(v_1,v_2)$ to denote the angle between them. 

\end{defi}

\begin{defi}
Given a vector $e$ and $0\le\theta\le\pi$, the cone with direction $e$ and opening $\theta$ is defined as $$\Gamma(e,\theta):=\{\lambda v| \lambda>0, \omega(v,e)< \theta\}.$$
\end{defi}

We assume $U_1$ and $U_2$ are two bounded convex domains in $\R^2$ with of area 1. 

We use $\psi:U_1\to\R$ to denote a convex function whose gradient is the optimal transport from $U_1$ to $U_2$, its existence a consequence of \cite{B}.  Moreover, we extend $\psi$ to the entire plane as the following function, still denoted by $\psi$: $$x\mapsto \sup_{y\in U_1} (\psi(y)+\nabla \psi(y)\cdot (x-y)).$$ We use $\varphi$ to denote a convex function whose gradient is the optimal transport from $U_2$ to $U_1$, and extended to $\R^2$ in a similar fashion.  Here $\cdot$ denotes the standard inner product of $\R^2$. 

In the following we often give statements for $\psi$ while omitting analogous ones concerning $\varphi$.

The starting point of the regularity theory is the following observation that the convexity of $U_2$ implies (see \cite{CMap1}):
\begin{prop}
$\psi$  is an Alexandrov solution to $$\det(D^2\psi)=\chi_{U_1} \text{ in $\R^2$}.$$
\end{prop}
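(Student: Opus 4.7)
The plan is to verify directly that the \MA measure $M\psi(E):=|\partial\psi(E)|$ equals $\chi_{U_1}\,dx$ as Borel measures on $\R^2$, via a sandwich argument: an upper bound on the total mass of $M\psi$ coming from the convexity of $U_2$, a matching lower bound on the mass inside $U_1$ coming from the Brenier measure-preservation property, and a comparison of totals to conclude.

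For the upper bound, the extension formula presents $\psi$ as the supremum of affine functions $\ell_y(x)=\psi(y)+\nabla\psi(y)\cdot(x-y)$ indexed by those $y\in U_1$ at which $\psi$ is differentiable, a set of full measure in $U_1$. By Brenier's theorem $\nabla\psi(y)\in U_2$ for a.e.\ such $y$, and since $U_2$ is convex the closure $\overline{U_2}$ is a closed convex set containing every slope appearing in the supremum. A standard separation argument then shows $\partial\psi(x)\subset\overline{U_2}$ at every $x\in\R^2$: if some $p\in\partial\psi(x)$ were outside $\overline{U_2}$, one could pick a direction $v$ with $p\cdot v>\sup_y\nabla\psi(y)\cdot v$, and then for $t>0$ the supporting inequality $\psi(x+tv)\ge\psi(x)+tp\cdot v$ would contradict the upper bound $\psi(x+tv)\le\psi(x)+t\sup_y\nabla\psi(y)\cdot v$ given by the extension formula. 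Consequently $M\psi(\R^2)=|\partial\psi(\R^2)|\le|\overline{U_2}|=|U_2|=1$.

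For the lower bound, fix a Borel set $E\subset U_1$ and let $E_d\subset E$ be the full-measure subset on which $\psi$ is differentiable. Then $\partial\psi(E)\supseteq\nabla\psi(E_d)$, so $M\psi(E)\ge|\nabla\psi(E_d)|$. The Brenier transport property $(\nabla\psi)_\#(\chi_{U_1}\,dx)=\chi_{U_2}\,dx$ asserts that $|\{x\in U_1:\nabla\psi(x)\in A\}|=|A\cap U_2|$ for every Borel $A\subset\R^2$. Applying this with $A=\nabla\psi(E_d)$ and using the inclusion $E_d\subset\{x\in U_1:\nabla\psi(x)\in A\}$ yields $|A\cap U_2|\ge|E_d|=|E|$, hence $|\nabla\psi(E_d)|\ge|E|$ and $M\psi(E)\ge|E|$.

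Setting $E=U_1$ gives $M\psi(U_1)\ge 1$; combined with $M\psi(\R^2)\le 1$ this forces $M\psi(\R^2\setminus U_1)=0$ and $M\psi(U_1)=1$. Since $M\psi(E)\ge|E|$ on every Borel $E\subset U_1$ and the totals on $U_1$ agree, the inequality is actually an equality on all such $E$, proving $M\psi=\chi_{U_1}\,dx$. The most delicate step is the upper bound: convexity of $U_2$ is indispensable, because the extension formula a priori only places slopes in $U_2$, and without convexity the subdifferential of such a supremum could expand to the convex hull of those slopes and leave $U_2$. The lower bound, by contrast, is essentially bookkeeping from Brenier's measure-preservation theorem.
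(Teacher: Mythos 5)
Your proof is correct and is essentially the standard argument from Caffarelli's cited paper \cite{CMap1}; the paper itself gives no proof of this proposition, delegating it entirely to that reference. The sandwich you set up --- $\partial\psi(\mathbb{R}^2)\subset\overline{U_2}$ from convexity of $U_2$ and the extension formula, $M\psi(E)\geq |E|$ on $U_1$ from the Brenier pushforward, and then equality of totals --- is exactly the mechanism that makes the result work, with only a routine measurability remark (the image $\nabla\psi(E_d)$ is analytic rather than Borel, but analytic sets are universally measurable and the pushforward identity extends) needed to make it airtight.
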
 

For a systematic introduction to the \MA equation interested readers can consult for example the classic book by Guti\'errez \cite{Giut}, the brief but insightful lectures by Figalli \cite{F} or the book by Le-Mitake-Tran \cite{LMT}. 

Sections are fundamental in the study of the \MA equation. \begin{defi}

The \textit{centred} section of height $h$ of $\psi$ at $x_0$ is $$\Shcpsi(x_0)=\{x\in\R^2| \psi(x)<\psi(x_0)+p\cdot (x-x_0)+h\},$$ where $p\in\R^2$ is chosen so that its centre of mass is $x_0$. 
\end{defi} For the existence of such a vector $p$, see \cite{CMap3}. Next we recall three properties of centered sections which were obtained in \cite{CMap3}.

The first one concerns the engulfing of sections, and it is a consequence of the doubling property of the \MA measure.

\begin{prop}
Given $0<t<\bar{t}<1$, there is $\bar{s}=\bar{s}(t,\bar{t})>0$ such that if $x_1\subset t \, \Shcpsi(x_0)$, then $$S_{sh}^c[\psi](x_1)\subset \bar{t} \, \Shcpsi(x_0) \text{ for all $s<\bar{s}$}.$$
\end{prop}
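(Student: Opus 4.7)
The plan is to combine the doubling property of the Monge-Amp\`ere measure of $\psi$ (valid for convex $U_1, U_2$ by \cite{CMap1, CMap3}) with the Aleksandrov-Caffarelli normalization of sections. Doubling makes the family of centered sections behave as quasi-metric balls and, by Caffarelli's argument, yields a universal \emph{engulfing constant} $K$ such that whenever $y \in S^c_r[\psi](x)$ and $r \le R$, one has $S^c_r[\psi](y) \subset S^c_{KR}[\psi](x)$. Applied to $x_1 \in \Shcpsi(x_0)$, this gives the inclusion $S^c_{sh}[\psi](x_1) \subset S^c_{Ksh}[\psi](x_0)$ for every $s \le 1/K$.

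To convert this section-to-section inclusion into the desired inclusion inside the dilation $\bar t\, \Shcpsi(x_0)$, I would normalize the big section by an affine map $A$ so that $A(\Shcpsi(x_0))$ is pinched between two concentric balls of comparable radii (John's lemma). The image $A(S^c_{Ksh}[\psi](x_0))$ is a centered section of the rescaled potential $\tilde\psi$ at the centroid $A x_0$ with normalized height $\sim s$; its diameter shrinks to zero as $s \to 0$, by the strict convexity of $\tilde\psi$ at $A x_0$ (itself a consequence of the doubling of the Monge-Amp\`ere measure). Since $A x_1 \in t \cdot A(\Shcpsi(x_0))$, the dilated target $\bar t \cdot A(\Shcpsi(x_0))$ contains a ball of radius $c(\bar t - t)$ around $A x_1$. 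Choosing $\bar s = \bar s(t,\bar t)$ so that the diameter of $A(S^c_{Ksh}[\psi](x_0))$ is below this ball size yields $A(S^c_{sh}[\psi](x_1)) \subset \bar t\, A(\Shcpsi(x_0))$, which is equivalent to the desired inclusion.

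The main obstacle is handling the case when $x_0$ or $x_1$ lies on or near $\partial U_1$, where $\tilde\psi$ could a priori fail strict convexity. This is precisely what the doubling of the MA measure circumvents: Caffarelli shows in \cite{CMap1} that the convexity of both $U_1$ and $U_2$ forces a uniform doubling even at boundary points, because convexity of $U_2$ prevents flat portions in the graph of $\psi$ over $\partial U_1$ that would otherwise destroy doubling. Once uniform doubling is in hand, the engulfing constant $K$ and the diameter bound on normalized concentric sections hold with universal constants independent of the specific admissible potential, and the argument closes.
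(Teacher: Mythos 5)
Your engulfing step has a gap. The form you quote --- if $y \in S^c_r[\psi](x)$ and $r \le R$ then $S^c_r[\psi](y) \subset S^c_{KR}[\psi](x)$ --- is a legitimate engulfing inequality, but applied with $x_1 \in \Shcpsi(x_0)$ (so $R = h$) and $r = sh$ it only yields $S^c_{sh}[\psi](x_1) \subset S^c_{Kh}[\psi](x_0)$, not the claimed $S^c_{sh}[\psi](x_1) \subset S^c_{Ksh}[\psi](x_0)$. The latter is false in general: if $x_1 \neq x_0$, then as $s \to 0$ the centered section $S^c_{Ksh}[\psi](x_0)$ shrinks to $\{x_0\}$ and so cannot contain $x_1 \in S^c_{sh}[\psi](x_1)$. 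Your second step then measures the diameter of $A(S^c_{Ksh}[\psi](x_0))$, a set pinned at $Ax_0$, whereas the object you actually need to control is $A(S^c_{sh}[\psi](x_1))$, which sits near $Ax_1$.

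The repair discards the false inclusion and works directly with the normalization and strict convexity ingredients you already invoke. After normalizing so that $B_c \subset A(\Shcpsi(x_0)) \subset B_{1/c}$ with $c$ depending only on the diameters, the renormalized potential $\tilde\psi$ is again a Brenier potential between convex sets with the same doubling, and the global $C^{1,\delta_0}$ estimate of \cite{CMap2} (the same input the paper uses to prove compactness of the family $\mathcal S(\bar\delta)$) gives it a uniform polynomial modulus of strict convexity. Applied at the point $Ax_1$ this yields $\mathrm{diam}\, A(S^c_{sh}[\psi](x_1)) \le C s^{\epsilon_0}$ for universal $C, \epsilon_0 > 0$. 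Since $Ax_1 \in t\, A(\Shcpsi(x_0))$ and $A(\Shcpsi(x_0)) \supset B_c$, the point $Ax_1$ is at distance at least $c(\bar t - t)$ from $\partial\bigl(\bar t\, A(\Shcpsi(x_0))\bigr)$, so choosing $\bar s$ with $C \bar s^{\epsilon_0} < c(\bar t - t)$ finishes the proof. For reference, the paper itself does not prove this proposition; it is quoted from \cite{CMap3}.
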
 
Here $\overline{t} \, \Shcpsi(x_0)$ is the dilation with respect to the centre $x_0$ by a factor of $\overline{t}$.

The second property is an area bound for $\Shcpsi[x_0]\cap U_1$.

\begin{prop}\label{p2.4}
There are positive constants $C$ universal and $c>0$ depending only on the diameter of $U_1$ and $U_2$, such that $$ Ch \ge |\Shcpsi[x_0]| \quad \quad \mbox{and} \quad |\Shcpsi[x_0]\cap U_1|\ge c h.$$
\end{prop}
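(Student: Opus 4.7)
The plan is to derive both bounds from Alexandrov's maximum principle together with a John's-lemma normalization of the centered section. The upper bound on $|S|$ is the hard direction, and once it is in hand the lower bound on the Monge--Amp\`ere mass follows by a short manipulation of Alexandrov.

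Applied to the convex function $\rho(x) := \psi(x) - \psi(x_0) - p\cdot(x-x_0)$ on $S := \Shcpsi(x_0)$, Alexandrov's maximum principle in two dimensions gives the universal estimate
\[
h^2 \le C_A\,|S|\cdot \mu_\psi(S) = C_A\,|S|\cdot|S\cap U_1|.
\]
Thus once the upper bound $|S| \le Ch$ is established, the lower bound $|S\cap U_1| \ge ch$ follows at once with $c = 1/(C_A C)$, so the main task is to prove $|S| \le Ch$.

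For this, by John's lemma the centered section $S$ is comparable to an ellipse $E$ centered at $x_0$; let $T$ be the affine map sending $E$ to $B_1$, so that $|\det T| \sim |S|^{-1}$. The rescaled potential
\[
\tilde\psi(y) := |\det T|\,\bigl[\psi(T^{-1}y+x_0) - \psi(x_0) - p\cdot T^{-1}y\bigr]
\]
still satisfies $\det D^2\tilde\psi = \chi_{\tilde U_1}$ with $\tilde U_1 := T(U_1-x_0)$, and its centered section at the origin is comparable to $B_1$ at height $\tilde h := h|\det T| \sim h/|S|$. The bound $|S| \le Ch$ therefore reduces to a universal lower bound $\tilde h \ge c_0$. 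When $x_0 \in \mathrm{int}(U_1)$, $\tilde U_1$ contains a fixed ball $B_r$, so $\det D^2\tilde\psi \ge 1$ on $B_r$, and a direct ABP-from-below comparison with a paraboloid inside $B_r$ yields $\tilde h \ge c_0$ at once. When $x_0 \in \partial U_1$, $\tilde U_1$ need not contain any fixed ball, and I would instead pass to the dual potential $\varphi$: Caffarelli's two-dimensional duality $|\Shcpsi(x_0)|\cdot|\Shcphi(y_0)| \asymp h^2$ at $y_0 = \nabla\psi(x_0)$, together with Alexandrov applied to $\varphi$ on its own centered section, bootstraps the estimate $|S_\psi|, |S_\varphi| \le Ch$ for the two potentials simultaneously. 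The diameter of $U_2$, which bounds the target of $\nabla\psi$, enters the argument here, producing the diameter dependence of the final constant $c$.

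The principal obstacle is the boundary case $x_0 \in \partial U_1$: there $S$ may be strongly elongated along $\partial U_1$ and a one-sided application of Alexandrov is not enough. One must track the centered sections of $\psi$ and $\varphi$ simultaneously and exploit the convexity of $U_2$, which is precisely the content of Caffarelli's observation that $\chi_{U_1}$ is a doubling Monge--Amp\`ere measure at $\partial U_1$. Making this bootstrap quantitative with the correct universal and diameter-dependent constants is the main technical difficulty.
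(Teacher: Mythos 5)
Your Alexandrov step, the derivation of $|S_h^c[\psi](x_0)\cap U_1|\ge ch$ from the upper bound $|S_h^c[\psi](x_0)|\le Ch$, and the normalization reducing that upper bound to a lower bound $\tilde h\ge c_0$ on the rescaled height are all sound. The gap is in the argument for $\tilde h\ge c_0$. Your ``interior'' case is misstated: $T$ is chosen to normalize the section $S$, not $U_1$, so $x_0\in\mathrm{int}(U_1)$ does not force $\tilde U_1=T(U_1-x_0)$ to contain a fixed ball. If $S$ protrudes past $\partial U_1$ then $\tilde U_1\cap B_1$ can be an arbitrarily thin strip even for interior $x_0$; the regime your paraboloid comparison actually handles is $S\subset U_1$, and for $x_0$ anywhere near $\partial U_1$ (which is the point of the proposition) this fails for every $h$ down to $0$. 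In the remaining regime you invoke the duality $|S_h^c[\psi](x_0)|\cdot|S_h^c[\varphi](y_0)|\asymp h^2$ for centered sections, but this is circular: in the paper this duality is the content of Proposition~\ref{p2.5}, which is stated explicitly as a \emph{consequence} of Proposition~\ref{p2.4}. For sections cut by a supporting plane the volume product $\asymp h^2$ does follow from polar duality and John's lemma, but for a centered section the slope $p$ is not a subgradient of $\psi$ at $x_0$, and justifying the same duality already requires the volume bounds you are trying to prove.

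What is missing is precisely the ingredient the citation to Theorem~3.1 of \cite{CMap3} supplies: the convexity of $U_2$ makes $\chi_{U_1}$ a doubling Monge--Amp\`ere measure with respect to the sections of $\psi$, so that $|S\cap U_1|\gtrsim|S|$ for every centered section. Once that balancedness is available, a single paraboloid comparison in the normalized picture gives $\tilde h\ge c_0$ directly, with no case split and no appeal to the centered-section duality. You name this ingredient in your last sentence but never produce it, and the ``bootstrap'' that is supposed to deliver it remains a plan rather than an argument. For context, the paper itself does not reprove Proposition~\ref{p2.4}; it cites Caffarelli's theorem and merely verifies that planar convex domains satisfy his polynomial-convexity hypothesis with parameters controlled by the inner and outer radii, so your self-contained attempt is a genuinely different (and, as it stands, incomplete) route.
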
 
\begin{proof}
This is proved in Theorem 3.1 in \cite{CMap3}. The bounds hold for all polynomially convex domains with the estimates depending on the dimension $n$, and parameters $\mu$ and $\lambda$, (see Lemma 3.1 in \cite{CMap3}). 
All convex domains in the plane are polynomially convex, and $\mu$ and $\lambda$ only depend on the inner and outer radii of the domains. 

\end{proof} 

A consequence of Proposition \ref{p2.4} is the following result.

\begin{prop}\label{p2.5}
There is a positive constant $\kappa$, depending only on the inner and outer radii of $U_1$ and $U_2$,  such that for each $h>0$ and $x_0\in\overline{\Omega_1}$, we have an ellipse $E_h$ of area equal to $h$ such that up to a translation $$\kappa^{-1}E_h\subset\Shcpsi(x_0) \cap U_1 \quad \mbox {and} \quad  \Shcpsi(x_0) \subset\kappa E_h,$$ 
$$\kappa^{-1}E^\perp_h\subset S_h^c[\varphi](\nabla \psi(x_0)) \cap U_2 \quad \mbox {and} \quad  S_h^c[\varphi](\nabla \psi(x_0))\subset\kappa E_h^\perp.$$ 
\end{prop}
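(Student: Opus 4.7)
The plan is in three steps: first build an ellipse $E_h$ satisfying the two inclusions for $\Shcpsi(x_0)$; second run the same argument for $\varphi$ to get an ellipse $F_h$; third identify $F_h$ with $E_h^\perp$ via Legendre duality after an affine normalization.

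\textbf{Step 1.} By Proposition \ref{p2.4}, $\Shcpsi(x_0)\cap U_1$ is a convex set of area comparable to $h$. John's lemma in the plane produces an ellipse $E'$ with $E'\subset \Shcpsi(x_0)\cap U_1\subset 2E'$, so $|E'|\sim h$. I rescale $E'$ about its center to area exactly $h$ and call the result $E_h$; the first inclusion $\kappa^{-1}E_h\subset \Shcpsi(x_0)\cap U_1$ is then immediate. For the second inclusion $\Shcpsi(x_0)\subset \kappa E_h$ I apply John's lemma also to the full centered section, obtaining $\tilde E\subset\Shcpsi(x_0)\subset 2\tilde E$ with $|\tilde E|\sim h$ (using the upper area bound from Proposition \ref{p2.4}). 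Since $E'\subset 2\tilde E$ and $|E'|\sim|\tilde E|$, a short convex-geometry check---normalizing $2\tilde E$ to the unit disk forces each semi-axis of $E'$ to be bounded below universally---gives $2\tilde E\subset C\,E'$. Hence $\Shcpsi(x_0)\subset 2\tilde E\subset C\,E'\subset \kappa E_h$.

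\textbf{Step 2.} Repeating the argument of Step 1 for $\varphi$ at the dual point $\nabla\psi(x_0)$ produces an ellipse $F_h$ of area $h$ satisfying the two inclusions on the second line of the proposition.

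\textbf{Step 3.} It remains to identify $F_h$ with $E_h^\perp$ up to a universal constant. Choose a linear map $A$ of determinant one sending $E_h-x_0$ to a disk of area $h$ centered at the origin, so that $\Shcpsi(x_0)$ becomes comparable to a disk under the change of variables $\tilde\psi(y)=\psi(A^{-1}y)$. Since $\det A=1$, $\tilde\psi$ is still an Alexandrov solution of $\det D^2\tilde\psi=\chi_{A(U_1)}$ on the convex unit-area domain $A(U_1)$, and it is the Brenier potential between $A(U_1)$ and $(A^T)^{-1}(U_2)$; its Legendre dual equals $\tilde\varphi(z)=\varphi(A^Tz)$. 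By the standard Legendre duality of sections---the ``ellipses of best fit'' for $\psi$'s section at $x_0$ and for $\varphi$'s section at $\nabla\psi(x_0)$ have reciprocal axes in the same directions by $\det D^2=1$---when the normalized $\tilde\psi$-section is comparable to a disk, so is the normalized $\tilde\varphi$-section at the corresponding dual point $(A^T)^{-1}\nabla\psi(x_0)$. Transporting back through $A^T$, $\Shcphi(\nabla\psi(x_0))$ is comparable to $A^T$ applied to a disk of area $h$. As $\det A=1$, the singular values of $A$ multiply to one; consequently $A^T$ applied to a disk gives an ellipse with the same axis lengths as $E_h$ but with the long and short directions interchanged---that is, $E_h^\perp$ up to a universal constant.

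\textbf{Main obstacle.} The delicate point is Step 3: one must justify the section-duality claim without pointwise regularity. I would argue by symmetry---if the $\tilde\varphi$-section were instead comparable to a non-round ellipse $F$, running Step 3 in reverse starting from the $\tilde\varphi$-side would force the $\tilde\psi$-section to be comparable to $F^\perp$, contradicting the fact that it is already a disk.
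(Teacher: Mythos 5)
Your Steps 1 and 2 are fine: Proposition \ref{p2.4} plus John's lemma does produce an ellipse $E_h$ of area $h$ comparable (up to translation) both to $\Shcpsi(x_0)\cap U_1$ and to $\Shcpsi(x_0)$, and the same works on the $\varphi$-side to produce $F_h$. The argument that an ellipse $E'\subset 2\tilde E$ with $|E'|\ge c|\tilde E|$ must be comparable to $\tilde E$ (after normalizing $2\tilde E$ to the unit disk, each semi-axis of $E'$ is bounded below) is correct.

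The genuine gap is Step 3, and you correctly identify it as the delicate point, but your proposed fix does not work. The ``symmetry'' argument is circular: the backward implication you want to run---``if the $\tilde\varphi$-section were comparable to $F$ then the $\tilde\psi$-section would be comparable to $F^\perp$''---\emph{is} the polar duality of sections that Step 3 is supposed to establish. You cannot invoke it to prove itself. Symmetry between $\psi$ and $\varphi$ only tells you that whatever functional relation holds between $E_h$ and $F_h$ is an involution; that is consistent with $F_h\sim E_h^\perp$ but does not single it out (the identity map is also an involution). Nor is the parenthetical justification ``by $\det D^2=1$'' enough: the Hessian relation $D^2\varphi(\nabla\psi)=(D^2\psi)^{-1}$ is a pointwise statement, whereas the section at height $h$ is a set of diameter $\sim h^{1/2}$; without regularity controlling the oscillation of $D^2\psi$ across the section---exactly what fails a priori at the boundary---this does not transfer. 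What \emph{is} needed, and is the content of the relevant lemmas in \cite{CMap3} that the paper is implicitly citing when it calls the proposition ``a consequence'' of Proposition \ref{p2.4}, is a genuine polar-duality estimate between the two sections. A sketch: if $E_h$ has semi-axes $a\ge b$ with $ab\sim h$, convexity and $\psi\le h$ on $E_h$ give $|\nabla\psi|\lesssim(h/a,h/b)$ on $\tfrac12 E_h$, so $\nabla\psi(\tfrac12 E_h\cap U_1)$ lands in a set of shape $\sim E_h^\perp$; measure preservation and a lower area bound for $\tfrac12 E_h\cap U_1$ (engulfing plus Proposition \ref{p2.4}) give that this image has area $\gtrsim h$; and Fenchel's identity $\varphi(\nabla\psi(x))=x\cdot\nabla\psi(x)-\psi(x)\lesssim h$ on $\tfrac12 E_h$ places the image inside a $\varphi$-section of height $\sim h$. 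Matching that image against the John ellipse $F_h$ of $S_h^c[\varphi](\nabla\psi(x_0))$ from your Step 2 is what forces $F_h\sim E_h^\perp$. Some version of this chain of inequalities (or a citation of the corresponding lemma in \cite{CMap3}) has to replace the symmetry argument.
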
 

Such comparison with ellipses allows us to exploit the affine invariance of the problem. To be precise, 
 let $A$ be the affine transformation with $\det A=1$ that maps $E_h$ to a disk. Define the following normalizations  $$\Omega_1= h^{- \frac 12} AU_1 \text{ and } \Omega_2=  h^{- \frac 12}  (A^{-1})^TU_2,$$
$$u(x)=\frac{1}{h}\psi(h^{\frac 12}A^{-1}x) \text{ and } v(y)=\frac{1}{h}\varphi(h^{\frac 12}A^Ty).$$Up to a translation, we might assume $0\in\partial\Omega_1\cap\partial\Omega_2$ and $\nabla u(0)=\nabla v(0)=0.$ Up to a constant, we can also assume $u(0)=v(0)=0.$

Following the definition of $(u,v)$, we know their sections of height $1$ at the origin are comparable to the unit ball up to a factor $\bar \delta$ which depends only on the maximum of the diameters of $U_1$ and $U_2$.  Moreover, the corresponding ellipses for the sections of $u$ and $v$ are dual to each other. 

In particular, all such normalizations of $\psi$ and $\varphi$ belong to the normalized family $\mathcal S(\bar \delta)$ defined below. Even their limits will be contained in the family since we allow unbounded domains in the following definition.

\begin{defi}
For $\bar\delta>0$, we say that $(u,v)\in\mathcal{S}(\bar\delta)$ if $u,v:\R^2\to\R$ are convex functions satisfying the following properties:

\begin{enumerate}
\item{There are (not necessarily bounded) convex sets $\Omega_1$ and $\Omega_2$ such that $$0\in\partial\Omega_1\cap\partial\Omega_2$$ and $\Omega_2=\nabla u(\Omega_1),  \Omega_1=\nabla v(\Omega_2);$ }
\item{$$\det(D^2 u)=\chi_{\Omega_1} \quad \mbox{in $\R^2$, and} \quad \quad u(0)=|\nabla u(0)|=0.$$}
\item{For $h\in (0,1]$, there is a point $x_h$ and an ellipsoid $E_h$ centred at $0$ of volume $h|B_1|$ such that $$x_h+\bar\delta E_h\subset\Omega_1\cap\Shcu(0) \text{ and } \Shcu(0)\subset\bar\delta^{-1}E_h;$$ Moreover, $E_1=B_1$.}
\item{$v$ satisfies similar properties as in 2) and 3) with $\Omega_1$ replaced by $\Omega_2$ and $E_h$ replaced by $E_h^\perp.$ Inside $\Omega_2$, $v$ coincides with the Legendre transform of $u$.}
\end{enumerate}
\end{defi} 

Clearly the class $\mathcal S(\bar \delta)$ remains invariant under the standard affine renormalization. 
Precisely, if $(u,v) \in \mathcal S(\bar \delta)$ then $( u_h, v_h) \in \mathcal S(\bar \delta)$ where
$$u_h(x):= \frac 1 h u(h^{\frac 12} A^{-1}_h  x), \quad v_h(y):= \frac 1h v( h^{\frac 12}A_h^Ty), \quad \quad h \in (0,1],$$
and $A_h$ is an affine transformation (i.e. $\det A_h=1$) which maps $E_h$ into $h^{\frac 12} B_1$.

\begin{defi}
If $(u,v) \in \mathcal S(\bar \delta)$ we denote $$\eta_u(h):=\eta(E_h),$$ as the eccentricity of the section $\Shcu(0)$.
\end{defi}

If $A_h=A_h(u)$ is an affine transformation used in the renormalization above then 
$$\eta_u(h)=\|A_h(u) \|^2.$$ 
Moreover, if $A_t(u_h)$ is an affine transformation which renormalizes the ellipsoid $E_t$ corresponding to $u_h$ then the product
$$A_t(u_h) \, \, A_h(u)$$
is an affine transformation which renormalizes $u_{th}$. In conclusion
\begin{equation}\label{e20}
\eta_u(t h) =\|A_t(u_h) \, \, A_h(u) \|^2 \le \eta_{u}(h) \, \, \eta_{u_h}(t).
\end{equation}

The advantage of working with the family $\mathcal S(\bar \delta)$ is given by the following compactness property.

\begin{prop}[Locally uniform $C^{1,\delta_0}$ estimate and compactness]
There is a dimensional $\delta_0>0$ such that given any compact set $K\subset\R^2$, there is a constant $C$ depending only on $K$ and $\bar\delta>0$ such that $$\|u\|_{C^{1,\delta_0}(K)}\le C \text{ and } \|v\|_{C^{1,\delta_0}(K)}\le C$$ for all $(u,v)\in\mathcal{S}(\bar\delta).$

Moreover, given a sequence $(u_n,v_n)\in\mathcal{S}(\bar\delta)$, there is a subsequence converging locally uniformly to a pair $(u,v)\in\mathcal{S}(\bar\delta).$
\end{prop}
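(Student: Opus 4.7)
The plan is to first derive uniform local $L^\infty$ bounds on $u$ and $v$ across the family $\mathcal{S}(\bar\delta)$, then obtain the $C^{1,\delta_0}$ estimate by combining interior regularity with Caffarelli's classical global $C^{1,\delta}$ theorem for optimal transport between convex planar domains \cite{CMap2}, and finally conclude the compactness statement by Arzel\`a-Ascoli together with the stability of \MA solutions under locally uniform convergence.

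For the $L^\infty$ step, convexity and $u(0)=|\nabla u(0)|=0$ immediately yield $u\ge 0$. For the upper bound on a compact $K\subset\R^2$, condition (3) at $h=1$ gives $u<1$ on $S_1^c[u](0)\subset\bar\delta^{-1}B_1$, a region of controlled geometry. To propagate the bound one uses that the \MA measure $\chi_{\Omega_1}$ is doubling at the origin (the argument in Proposition \ref{p2.4} only uses polynomial convexity of $\Omega_1$, which is preserved by the renormalizations defining $\mathcal{S}(\bar\delta)$) together with the Caffarelli-Guti\'errez engulfing of sections. Iterating dyadically and exploiting the affine invariance of $\mathcal{S}(\bar\delta)$, the comparison with ellipses in (3) extends to heights $h\ge 1$ with uniform constants, until some $S_{h_K}^c[u](0)\supset K$ gives $u\le h_K=h_K(K,\bar\delta)$ on $K$, with the symmetric argument handling $v$. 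With uniform $L^\infty$ bounds in hand, convexity yields uniform local Lipschitz bounds; Caffarelli's interior $C^{1,\alpha}$ theory applies inside $\Omega_1$ where $\det D^2 u=1$, while the normalization in (3) at $0\in\partial\Omega_1$ places the boundary behavior in the scope of Caffarelli's global $C^{1,\delta}$ regularity \cite{CMap2}, which requires only convexity of the domains; this produces a dimensional $\delta_0>0$ with constants depending only on $K$ and $\bar\delta$.

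For compactness, Arzel\`a-Ascoli applied to the uniform $C^{1,\delta_0}$ bound extracts a locally uniformly converging subsequence $u_{n_k}\to u$, $v_{n_k}\to v$ with convex limits. The domains $\Omega_{1,n_k}$ and $\Omega_{2,n_k}$ converge in the Hausdorff sense to convex (possibly unbounded) limits; the \MA measures converge weakly so that $\det D^2 u=\chi_{\Omega_1}$; condition (3) passes to the limit because centered sections vary continuously under uniform convergence of convex functions and the defining ellipses $E_h$ have bounded parameters; and the Legendre duality required in (4) is preserved under the limit. Hence $(u,v)\in\mathcal{S}(\bar\delta)$.

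The main obstacle is the first step: when $\Omega_1$ is unbounded, the compact $K$ may extend far beyond $S_1^c[u](0)$, and the iterative doubling argument needs uniform doubling and engulfing constants across $\mathcal{S}(\bar\delta)$. This is precisely where the affine invariance of the class and the preservation of polynomial convexity under the renormalization become essential; once the $L^\infty$ bound is established, the $C^{1,\delta_0}$ estimate and the compactness statement reduce to Caffarelli's classical results and standard stability arguments for the \MA equation.
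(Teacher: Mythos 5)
Your overall strategy follows the paper's: uniform local $L^\infty$ bounds from the engulfing of sections and the normalization at height $1$, then $C^{1,\delta_0}$ from Caffarelli's boundary theory, then compactness via Arzel\`a-Ascoli together with stability of the defining properties. There is, however, a genuine gap concerning the domain on which the $C^{1,\delta_0}$ estimate is claimed. The Proposition asserts the estimate on compact subsets of $\R^2$, while Caffarelli's boundary regularity \cite{CMap2} gives it only on $\overline{\Omega}_1$ (respectively $\overline{\Omega}_2$ for $v$). Outside $\overline{\Omega}_1$ the \MA measure of $u$ vanishes and $u$ is merely the supremum of tangent planes from $\Omega_1$; the H\"older continuity of $\nabla u$ there is not automatic, and establishing it is precisely the content of Remark~\ref{r2.1}: one shows that for $z_0\notin\overline{\Omega}_2$ the supporting plane of $v$ at $z_0$ coincides with the supporting plane at a boundary point $y_0\in\partial\Omega_2$ (by locating the extremal points of the contact set and using strict convexity and a balancing argument), and then reduces the estimate to the polynomial modulus of strict convexity on $\overline{\Omega}_1$. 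Your proposal does not address this exterior extension, so as written it only yields the $C^{1,\delta_0}$ bound on $\overline{\Omega}_1$ and $\overline{\Omega}_2$.

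A smaller imprecision: condition (3) at $h=1$ does not give $u<1$ on $S_1^c[u](0)$, because the centered section is tilted, $S_1^c[u](0)=\{u<p\cdot x+1\}$ with $p$ chosen so that the barycenter is $0$. The correct conclusion is $u\le C(\bar\delta)$ on the section; this follows because $|p|$ is controlled once one notes $u\ge 0$ on a ball $B_{c\bar\delta}\subset S_1^c[u](0)$. This does not affect the rest of your argument, and the compactness conclusion, which only needs the local $L^\infty$ and Lipschitz bounds, goes through as you describe.
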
 

\begin{proof}
This is essentially Caffarelli's global $C^{1,\delta_0}$-estimate in \cite{CMap2}. Since the domains $\Omega_i$ could be unbounded, we provide a few details.

By Lemma 4 in \cite{CMap2}, (2) in Definition 2.5 implies that there exists $C_0$ depending only on the dimension of the space such that all the sections centred at $x_0,x_1 \in \overline{\Omega_1}$ satisfy the engulfing property
\begin{equation}
x_1\in S^c_t(x_0)  \quad \Longrightarrow \quad 2 \, S_{t}^c[u](x_1) \subset  S_{C_0t}^c[u](x_0)\quad \quad \forall t \ge 0.
\end{equation}

It is not difficult to see that (3) in Definition 2.5  (applied to $x_0=x_1=0$) and $S_1^c[u](0) \sim B_1$ give that for any $R$ large we have
$$B_{4R} \subset S^c_{K_1}[u](0) \subset B_{K_2},$$
 for some appropriate constants $K_i$ depending on $R$ and $\bar\delta$. 
 
 The first inclusion shows that $u \le C(K_1)$ in $B_{2R}$ and therefore $|\nabla u| \le C(R,\bar\delta)$ in $B_R$. 
The second inclusion  shows that for some small $\eps_0>0$, 
$$ S_t^c[u](x_0) \subset B_{C t^ {\eps_0}}(x_0) \quad \mbox{for all $t \le 1$,}$$
which gives a polynomial modulus of strict convexity for $u$ in $\overline{\Omega}_1 \cap B_R$. 

This in turn implies $v$ has bounded $C^{1,\delta_0}$ norm when restricted to the set of points which have supporting planes in $\overline{\Omega_2} \cap B_R$, and the first conclusion of the Proposition follows. See also Remark \ref{r2.1} below. 

Since we already established that $u$, $v$ are uniformly bounded locally, we can extract a convergent subsequence and the fact that properties 1)-4) are preserved under uniform limits on compact sets is standard.  \end{proof}

\begin{rem}\label{r2.1}In \cite{CMap2} it was proved that $v \in C^{1,\delta_0}$ in $\overline{\Omega}_2$, however we show here that this holds in the whole space. 

First we claim that if $z_0$ is a point outside $\overline \Omega_2$, then the supporting plane $l_{z_0}$ to $v$ at $z_0$ coincides with $v$ on some infinite ray that starts at some $y_0 \in \partial \Omega_2$ in the direction of $z_0$. Indeed, since the \MA measure of $v$ vanishes outside $\Omega_2$ we find that all the extremal points of the convex set $\{v=l_{z_0}\}$ belong to $\overline {\Omega}_2$. Since $v$ is strictly convex in $\overline \Omega_2$, the extremal set must have only one point $y_0 \in \partial \Omega_2$. This implies that $\{v=l_{z_0}\}$ is a cone with vertex at $y_0$, and the claim holds.  

Next we show that the supporting planes at $z_0$ and $y_0$ must coincide. Assume for simplicity of notation that $y_0=0$ and $l_{y_0}=0$. If $l_{z_0}\neq 0$ then, we can find a line $s e$, $s \in \mathbb R$, $|e|=1$ passing through $0$, which in the direction $s >0$ points towards the interior of $\Omega_2$, such that the restriction of $v$ to this line is not differentiable at $0$. Now we can use the standard doubling measure argument for sections $S^c_{\mu s}[v](se)$ with $\mu$ sufficiently small, and $ s \to 0^+$, and show that they cannot be balanced with respect to the center $se$ to reach a contradiction.

Finally we consider another point $z_1$ with supporting plane of slope, say $x_1=s e_1$, $s>0$, $|e_1|=1$, and with corresponding ray starting at $y_1 \in \overline {\Omega}_2$.
 Using the polynomial modulus of convexity of $u$ on the segment $[0,x_1]$ we find that
 $$y_1 \cdot x_1 \ge c |x_1|^M \quad \Longrightarrow \quad s \le C (y_1 \cdot e_1)^ {\delta_0}.$$
 Since the rays are infinite the convexity implies that
 $$z_0 \cdot e_1 \le 0, \quad (z_1-y_1) \cdot e_1 \ge 0 \quad \Longrightarrow \quad y_1 \cdot e_1 \le |z_1-z_0|,$$
 hence
 $$|\nabla v(z_1)-\nabla v (z_0)|=s \le C (y_1 \cdot e_1)^{\delta_0} \le C |z_1-z_0|^{\delta_0}.$$ 
\end{rem}
\section{Obliqueness} 
For a pair $(u,v)\in\mathcal{S}(\bar\delta)$, we show that the tangent rays to the domains at corresponding points cannot be perpendicular. 

In a neighborhood $B_r$ of $0$, $\partial\Omega_1$ is the graph of a convex function $\phi$. Up to a rotation, one has 
$$B_r\cap\Omega_1=\{(x_1,x_2)\in\R^2| x_2>\phi(x_1)\}\cap B_r.$$

\textit{The right tangent} to $\Omega_1$ at $0$, to be denoted by $R_{\Omega_1}(0)$, is the unit direction given by the ray starting from $0$ with slope $\lim_{t\to 0^+}\frac{\phi(t)}{t}.$ Symmetrically, \textit{the left tangent} to $\Omega_1$ at $0$, denoted by $L_{\Omega_1}(0)$, is the direction of the ray starting from $0$ with slope $\lim_{t\to 0^-}\frac{\phi(t)}{t}.$ Geometrically, start from any ray pointing outside $\Omega_1$ and rotate it clockwise, the left tangent is the critical ray before  entering $\Omega_1$.  The right tangent is the critical ray if we rotate in the counter-clockwise direction.

Similarly, we can define the left and right tangents to $\partial\Omega_2$.  See Figure 1.

\begin{figure}[h]
\begin{center}
\includegraphics[totalheight=4cm]{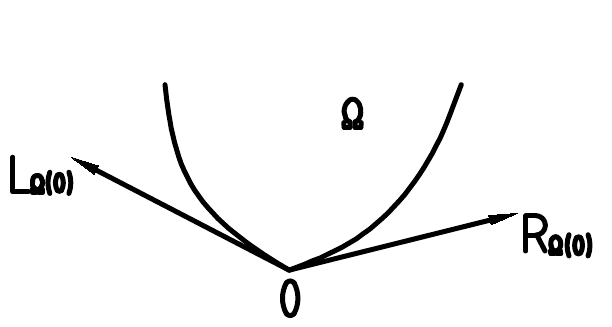}
\caption{Left and right tangents}
\end{center}
\end{figure}

By convexity of $u$ and $v$, one easily sees that
\begin{equation}\label{e30}
\LTO(0)\cdot\LTT(0)\ge 0, \quad \mbox{ and } \quad \RTO(0)\cdot\RTT(0)\ge 0,
\end{equation}
and corresponding tangents are within $\pi/2$ from each other. Obliqueness amounts to ruling out the case where the tangents are exactly perpendicular.

\begin{thm}
$$\LTO(0)\cdot\LTT(0)> 0,$$ and $$\RTO(0)\cdot\RTT(0)> 0.$$
\end{thm}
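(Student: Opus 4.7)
The plan is to argue by contradiction, exploiting the compactness of the family $\mathcal{S}(\bar\delta)$. The two statements in the theorem are analogous, so I focus on the right-tangent case; suppose $\RTO(0) \cdot \RTT(0) = 0$, and after a rotation take $\RTO(0) = e_1$ and $\RTT(0) = e_2$.

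First I would zoom in at the origin: applying the affine rescalings of Section 2 along a sequence $h_k \to 0^+$ produces $(u_{h_k}, v_{h_k}) \in \mathcal{S}(\bar\delta)$, and Proposition 2.6 extracts a locally uniform subsequential limit $(u_\infty, v_\infty) \in \mathcal{S}(\bar\delta)$ with domains $\Omega_1^\infty, \Omega_2^\infty$. Because the rescalings are linear maps that preserve tangent cones at the origin, the perpendicularity should pass to the limit: the right tangent of $\Omega_1^\infty$ at $0$ remains $e_1$ and that of $\Omega_2^\infty$ is $e_2$. Next I would promote this pointwise tangent information into a global half-plane structure. The inclusions $x_h^\infty + \bar\delta E_h^\infty \subset \Omega_1^\infty$ for all $h \in (0, 1]$ (with $E_1^\infty = B_1$), together with the tangent at $0$ being $e_1$, force the long axis of $E_h^\infty$ to be essentially along $e_1$, so $\Omega_1^\infty$ contains arbitrarily elongated regions stretching in the $e_1$ direction near $0$. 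Iterating the renormalization once more and invoking compactness again should give $\Omega_1^\infty = \{x_2 > 0\}$ and, by the symmetric argument for $v_\infty$, $\Omega_2^\infty = \{y_1 > 0\}$.

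The contradiction then follows cleanly. The second boundary condition $\nabla u_\infty(\partial\Omega_1^\infty) = \partial\Omega_2^\infty$, together with $\Omega_2^\infty = \{y_1 > 0\}$, forces $\partial_{x_1} u_\infty(x_1, 0) \equiv 0$ for all $x_1 \in \R$, and since $u_\infty(0) = 0$ we conclude $u_\infty \equiv 0$ along the entire $x_1$-axis. But then the centered section $\Shcu(0)$ of the limit at height $1$, i.e.\ $\{u_\infty < p \cdot x + 1\}$ for the appropriate $p$, contains a semi-infinite piece of the $x_1$-axis and is therefore unbounded, contradicting the inclusion $\Shcu(0) \subset \bar\delta^{-1} E_1 = \bar\delta^{-1} B_1$ from Definition 2.5. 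The main obstacle is the rigidity step in the middle paragraph, namely promoting the tangent information at a single point into a global half-plane shape for both limit domains; I would tackle this through a careful iterated use of the ellipse duality $E_h^\infty \leftrightarrow (E_h^\infty)^\perp$ linking $u_\infty$ and $v_\infty$, so that the boundary behavior of the two domains is synchronized at every scale.
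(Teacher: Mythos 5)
Your approach is genuinely different from the paper's: you try to blow up at the origin, classify the limit domains as half-planes, and derive a contradiction from the degeneracy of the limit potential, whereas the paper works directly with the original pair $(u,v)$ and proves the key growth estimate $u(x_1,x_2)\le Cx_2^2$ (for $x_1\le 0$, $x_2\in[0,\delta]$) via a maximum-principle argument for $w=\partial_2 u - Cx_2$ applied to the linearized \MA operator. That estimate then forces $|S_h[u](0)|/h\to\infty$, contradicting the volume bound of Proposition~\ref{p2.4}.

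The central step you yourself flag — ``promoting the tangent information at a single point into a global half-plane shape for both limit domains'' — is a genuine gap, and I do not see how to close it with the tools at hand. The blow-up of a convex domain at a boundary corner is its tangent cone, which is in general a proper convex cone and not a half-plane; the anisotropic renormalizations $A_h$ could in principle straighten a cone into a half-plane, but only if their norms blow up, i.e.\ only if $\eta_u(h)\to\infty$. There is no a priori reason for the eccentricity to be unbounded — quite the opposite, the whole thrust of Section~4 (which itself relies on obliqueness) is to show $\eta_u(h)$ grows slowly — so the phrase ``iterating the renormalization once more and invoking compactness again should give $\Omega_1^\infty=\{x_2>0\}$'' is asserting precisely what needs to be proved. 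Also note that without the full half-plane structure the rest of your contradiction does not go through: if $\Omega_1^\infty$ is a cone whose boundary on the right is $[0,\infty)e_1$, you only get $u_\infty=0$ on the positive $x_1$-ray, and then the centred section $S_1^c[u_\infty](0)=\{u_\infty<p\cdot x+1\}$ is bounded as soon as $p_1<0$, so no contradiction with $S_1^c\subset\bar\delta^{-1}B_1$ follows. Finally, a smaller point: ``linear maps preserve tangent cones at the origin'' is not correct as stated — the maps $A_h$ distort opening angles arbitrarily. What is true, and what you actually need, is that the perpendicularity of corresponding tangent rays is preserved under the dual pair $(A_h,(A_h^{-1})^T)$, since $A v_1\cdot(A^{-1})^Tv_2=v_1\cdot v_2$.
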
 

\begin{proof}
We only give the argument concerning left tangents. 

Suppose, on the contrary, that \begin{equation}\label{e31}\LTO(0)\cdot\LTT(0)=0.\end{equation}

We denote by $x \in \R^2$ the variables for the potential $u$ and by $y \in \R^2$ the variables for the potential $v$.
After an affine transformation, we may assume that \begin{equation}\label{e32}\omega(\RTT(0),\LTT(0))\ge\frac{2\pi}{3}.\end{equation}
Notice that affine transformations preserve relation \eqref{e31}. 

Finally, we can rotate the coordinate system such that $\LTO(0)$ is the direction of the negative $x_1$-axis, and that $\LTT(0)$ is direction of the positive $y_2$-axis. This means that $\Omega_2 \subset \{y_1 > 0\}$, thus $u$ is nondecreasing in the $x_1$ direction. 

Also, $\partial \Omega_1$ contains the graph of a convex function above the negative $x_1$ axis
$$x_2=\gamma(x_1) \quad \quad \mbox{ with $x_1 \in [-r,0]$, $r>0$ small,}$$ 
which is tangent to the negative $x_1$-axis at the origin. Moreover, by the continuity of the map $\nabla u$, we may assume that $\nabla u$ maps the graph of $\gamma$ onto the graph of a convex function over the $y_2$-axis, which is included in $\partial\Omega_2\cap\{x_2>0\}$.

First we notice that $$ \gamma >0 \quad \mbox{ in $[-r,0)$}.$$
 Otherwise $\partial\Omega_1$ contains a line segment where $u=0$, which is a consequence of the monotonicity of $u$ in the $x_1$ direction together with $u(0)=0$, $\nabla u(0)=0$. This contradicts the strict convexity of $u$ in $\overline{\Omega}_1$. 
 
Meanwhile,  by \eqref{e32}, there is a line segment $se_1$, $ s\in [0, s_0]$ along the positive $y_1$-axis with end point lying inside $\Omega_2$. Denote its image under $\nabla v$ by $\Gamma$. By convexity, and the fact that $v$ is smooth in $\Omega_2$ and is $C^{1,\delta_0}$ in 
$\overline \Omega_2$ we conclude that the curve $\Gamma$ is a graph above $x_2$ axis, smooth except possibly at the origin. Moreover, $\Gamma$ lies in the first quadrant and its endpoint is interior to $\Omega_1$. 

Combining these, we can find some $\delta>0$ such that $\gamma(-r)>2\delta$ and   $\sup_\Gamma x_2>2\delta.$ See Figure 2.

\begin{figure}[h]
\begin{center}
\includegraphics[totalheight=4cm]{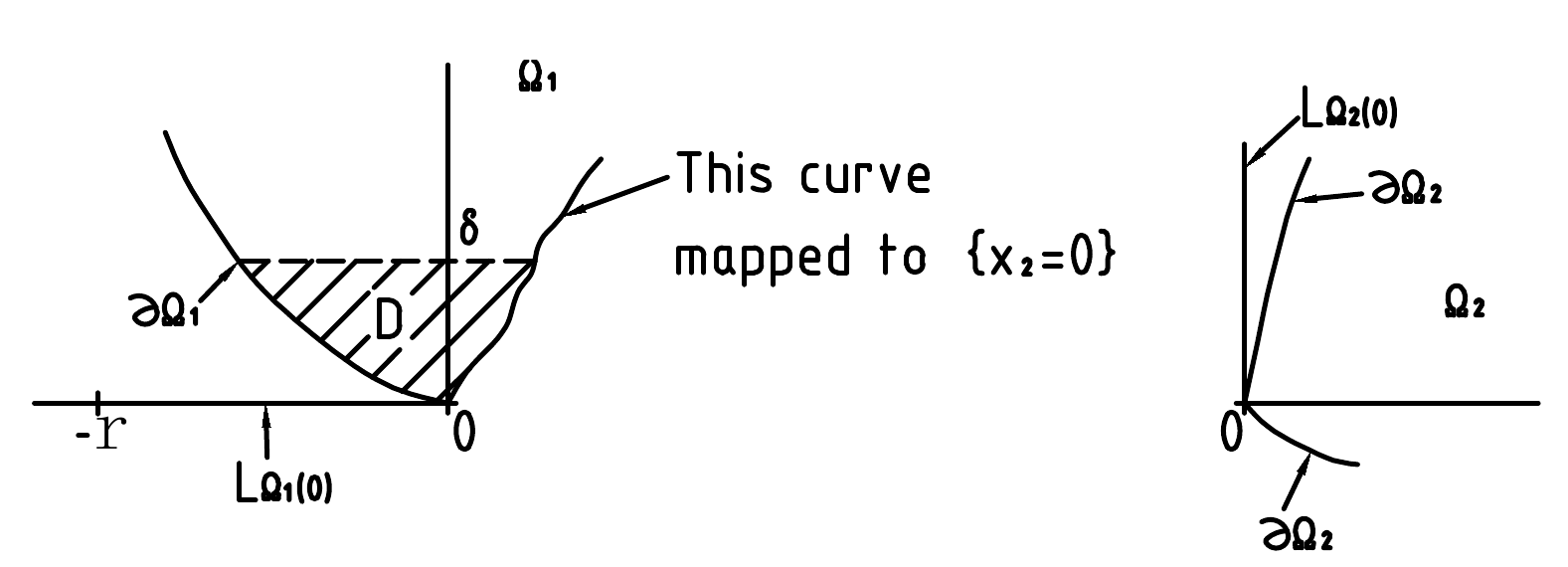}
\caption{}
\end{center}
\end{figure}

\textit{Claim:} There is a constant $C$ such that  
\begin{equation}\label{e33}u(x_1,x_2)\le Cx_2^2 \quad \quad \mbox{if} \quad x_1\le 0, \quad x_2\in [0,\delta].
\end{equation}

Once this claim is established, we have $$S_h[u](0)\supset \left \{(x_1,x_2)|x_1\le 0, x_2\le c h^{1/2} \right\}\cap\Omega_1.$$ Here $\Shu(0)$ is the section of $u$ at $0$ of height $h$ defined as $$\Shu(0)=\{x\in\Omega_1|u(x)<h\}.$$

Since $\partial\Omega_1$ is tangent from the left to the $x_1$-axis, $$\lim_{h\to 0}\frac{|\{(x_1,x_2)|x_1\le 0, x_2\le c h^{1/2}\}\cap\Omega_1|}{h}=\infty.$$ This implies 
$|S_h[u](0)|/h\to\infty$ as $h\to 0$, contradicting the universal volume estimate of sections. This contradiction rules out \eqref{e31}.  

Consequently, to complete the proof, it suffices to prove 
\begin{equation}\label{e36}
u(0,x_2) \le C x_2^2 \quad \quad \mbox{if} \quad x_2 \in [0,\delta],
\end{equation}
which, by the $x_1$-monotonicity of $u$ implies \eqref{e33}. We do this in two steps. In the first step, we establish this inequality under the assumption that $u \in C^2$ up to the boundary on the graph of $\gamma$. In the second step, we remove this restriction by combining step 1 with an approximation argument.

\textit{Step 1:}
In this case, since $\Omega_2\subset\{x_1>0\}$ and $\Omega_2$ is tangent to the positive $x_2$-axis, the image under $\nabla u$ would move to the left when we move along $\partial\Omega_1$ from the left towards $0$, that is, \begin{equation*}
\DDX u(x_1,\gamma(x_1))\ge\DDX u(x_1^*,\gamma(x_1^*)) \text{ if $-r<x_1<x_1^*\le 0$}.
\end{equation*}
Meanwhile, convexity of $u$ implies $\DDX u(x_1^*,\gamma(x_1))\ge\DDX u(x_1,\gamma(x_1)),$ thus $$\DDX u(x_1^*,\gamma(x_1))\ge\DDX u(x_1^*,\gamma(x_1^*)).$$

Now note that $\gamma(x_1)>\gamma(x_1^*)$, and as we let $x_1 \to x_1^*$ we obtain
\begin{equation}\label{e35}
\DDX\DDY u(x_1^*,\gamma(x_1^*))=\DDY\DDX u(x_1^*,\gamma(x_1^*))\ge 0 \text{ for $-r<x_1^*\le 0.$}
\end{equation} 
Note that this is the only step where $C^2$-regularity of $u$ is used, to exchange the order of the two derivatives.

Define $D$ the region above the graphs of $\gamma$ and $\Gamma$, and below the line $x_2= \delta$.

We choose a large constant $C$ such that 
$$\DDY u\le Cx_2 \text{ along $\{x_2=\delta\}$}.$$
Define  $$w:=\DDY u-Cx_2,$$ and let $L$ denote the linearized \MA operator of $u$. Then $$\begin{cases}L \,  w=0 &\text{ in $D$},\\
w\le 0 &\text{ along $\{x_2=\delta\}$}, \\w\le 0 &\text{ along $\Gamma \cap\overline{D}$ },\\ \DDX w\ge 0 &\text{ along $\partial\Omega_1\cap\overline{D}$}.
\end{cases}$$ For the last inequality we used \eqref{e35}. Maximum principle for $L$ gives $w\le 0$ inside $D$ which easily implies the desired inequality \eqref{e36}. This completes the proof of Step 1.  

\textit{Step 2:}
The case for general domains follows from approximation. 

It suffices to establish $w\le 0$ in $D$.  
To see this, take  $$w_\eps:=\DDY u- Cx_2+\eps (x_2^2-1),$$
which is a subsolution to the linearized \MA equation, and notice that $w_\eps \le w \le 0$ on the part of $\partial D$ which lies either on $\{x_2=\delta\}$ or on $\Gamma$. Thus if the maximum of $w_\eps$ in $\overline D$ is positive, then it must occur on the part of $\partial D$ which lies on $\partial \Omega_1$, say at a point $x_0 \in \partial\Omega_1\cap\overline{D}\backslash\{0\}$.
After subtracting from $w_\eps$ the function $\frac 12 \eps[(x-x_0)\cdot e_2]^2$ if necessary, we may assume $w_\eps$ has a strict maximum at $x_0$. 

Now take a tiny neighborhood $U$ of $x_0$ in $\Omega_1$ such that $V=\nabla u(U)$ is convex. Only modifying $U$ and $V$ along $\partial\Omega_1$ and $\partial\Omega_2$ in a neighborhood of $x_0$ and $\nabla u(x_0)$, we can approximate $U$ and $V$ by sequences of sets $U_n$ and $V_n$ which are uniformly convex and smooth in fixed neighborhoods of $x_0$ and $\nabla u(x_0)$. Define $u_n$ and $v_n$ to be the potentials for the optimal transports between $U_n$ and $V_n$. 

Let $w_{\eps,n}:=\DDY u_n-Cx_2+\eps (x_2^2-1)$, and we claim that for large $n$ we have 

a) maximum of $w_{\eps,n}$ in the set $\overline{U}_n$ occurs at a point $x_n$ which converges to $x_0$; 

b) $u_n \in C^{2, \alpha}$ up to the boundary in a fixed neighborhood of $x_0$.

Once we have these, by maximum principle, maximum of $w_{\eps,n}$ in $\overline{U}_n$ lies on $\partial U_n$. Part a) forces the maximum to occur at some  $x_n \in\partial U_n\cap B_r(x_0)$ for  $r$ small. However, along this part of boundary, part b) implies $u_n \in C^2$ and the arguments in Step 1 apply. In particular $\DDX w_{\eps,n}(x_n)\ge 0$ and $w_{\eps, n}$ cannot reach its maximum at $x_n$, a contradiction. 

Next we prove a) and b). By compactness the potentials $u_n$ must converge uniformly (up to constants) to a potential $\bar u$ of the transport map between $U$ and $V$ and $\bar u=u$ in $U$ by uniqueness of optimal transport. Moreover, from our construction, at any point $\bar u$ has a supporting plane with slope in $\overline V$. Since $u \in C^{1,\delta}$, and the supporting planes for $u$ at points near $x_0$ occur in $ \overline V$, we find $\bar u = u$ in some small neighborhood $B_{2 r}(x_0)$. In conclusion $$u_n \to u \quad \mbox{ uniformly in $B_{2r}(x_0)$}.$$
Using that $u \in C^{1,\delta}$ this means that $\|\nabla (u_n - u)\|_{L^\infty} \to 0$ in $B_r(x_0)$ and the part a) of the claim follows.
 
 Now part b) follow from the localized boundary $C^{2,\alpha}$ estimates of \cite{CMap3}. 

\end{proof} 

\section{Growth of eccentricity}
The goal of this section is to show that for $(u,v)\in\mathcal{S}(\bar\delta)$, the eccentricity of the sections grows at most geometrically at a slow rate as one decreases the height. 

To illustrate the idea, assume  $\Shcu(0)$ is highly eccentric. Its normalizing transformation shrinks the direction of $e_{long}(E_h)$  and stretches the orthogonal direction by a large factor.  If $\EL(E_h)$ points well inside $\Omega_1$, then after this normalization $\partial\Omega_1$ becomes almost flat. A Pogorelov-type estimate as in \cite{CMap3} gives the desired estimate.  The same argument works if $\EL(E_h^\perp)$ points well inside $\Omega_2$. 

If neither $E_h$ nor $E_h^\perp$ point well inside the domain, then their long axises must be almost tangent to the domains. Obliqueness forbids the long axises to be tangent to the domains `from the same side', that is, if $\Shcu(0)$ is tangent to $\Omega_1$ `from the left', $\Shcv(0)$ must be tangent to $\Omega_2$ `from the right'. In particular, $\LTO(0)$ is orthogonal to $\RTT(0)$. In Lemma \ref{l42} we show that this special geometry corresponding to critical corner domains again leads to the desired estimate.

We first deal with the case when the long axis points well inside one of the domains. We recall the classical Pogorelov estimate (see Corollary 1.1 in \cite{CMap3}). 

\begin{prop}[Pogorelov estimate] \label{PE}
Assume that $(u,v) \in \mathcal S(\bar \delta)$ and $$\partial \Omega_1 \subset \{x_2=0\} \quad \mbox{ in $S_1^c[u](0)$.}$$
Then
$$ u_{11} \le C \quad \mbox{in $B_c$},$$
for some constants $C$ large, $c$ small, depending only on $\bar \delta$. 
\end{prop}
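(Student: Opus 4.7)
The approach is the boundary Pogorelov estimate for Monge-Amp\`ere at a flat boundary, following Caffarelli \cite{CMap3}. Set $\Omega := S_1^c[u](0) \cap \Omega_1$; its boundary decomposes into a ``lateral'' piece $\Sigma_\ell = \partial S_1^c[u](0) \cap \overline{\Omega_1}$, on which $u$ equals the section height $1$ (relative to the supporting plane used in the centred section), and a ``flat'' piece $\Sigma_f = \partial\Omega_1 \cap \overline{S_1^c[u](0)} \subset \{x_2=0\}$. Since $e_1$ is a tangential direction along all of $\Sigma_f$, differentiating $\log\det D^2 u = 0$ twice in the direction $e_1$ yields
\[
L(u_{11}) = u^{ip}u^{jq} u_{pq,1} u_{ij,1} \ge 0,
\]
where $L = u^{ij}\partial_{ij}$ is the linearized Monge-Amp\`ere operator; in particular $\log u_{11}$ is an $L$-subsolution in $\Omega$. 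To legitimize these pointwise computations up to the boundary I would first approximate $\Omega_1$ and $\Omega_2$ by uniformly convex smooth domains as in Step 2 of the proof of Theorem 3.1 and pass to the limit at the end.

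I would then apply the maximum principle to the standard Pogorelov auxiliary function
\[
\Phi(x) = (1-u(x))^\alpha \, u_{11}(x) \, e^{\mu|x|^2/2},
\]
with $\alpha$ small and $\mu$ large, both depending only on $\bar\delta$. By construction $\Phi\equiv 0$ on $\Sigma_\ell$, so its maximum on $\overline\Omega$ is attained either at an interior point or at a point of $\Sigma_f$. In the interior case, the usual Pogorelov test-function computation applies: using $\nabla\log\Phi = 0$ to eliminate the cross terms and choosing $\alpha,\mu$ so that the indefinite contributions from $L(\log(1-u))$ and $L(|x|^2/2)$ are absorbed into the positive quadratic-in-$(\log u_{11})_i$ term produced by $L(\log u_{11})$, one obtains $u_{11}\le C(\bar\delta)$ at the maximum.

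The main obstacle is the case where the maximum of $\Phi$ lies on the flat boundary $\Sigma_f$. Here I would argue as follows: at a boundary maximum $x_0 \in \Sigma_f$ the tangential condition $\partial_1\Phi(x_0) = 0$ and the Hopf-type inequality $\partial_2\Phi(x_0) \ge 0$ for the $L$-subsolution $\Phi$, combined with the Monge-Amp\`ere relation $u_{11}u_{22}-u_{12}^2 = 1$ on $\Sigma_f$ and the uniform $C^{1,\delta_0}$ bound on $\nabla u$ in the fixed ball $\bar\delta^{-1}B_1 \supset S_1^c[u](0)$ supplied by Proposition 2.7, pin down $u_{11}(x_0)\le C(\bar\delta)$. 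Combining the two cases gives $\Phi \le C(\bar\delta)$ throughout $\overline\Omega$, and evaluating at any point of a small ball $B_c$ on which $1-u \ge \tfrac12$ yields $u_{11} \le C$ in $B_c$. This flat-boundary analysis is the delicate step; the flatness hypothesis of the proposition is essential for it, since without it the tangential differentiation of the Monge-Amp\`ere equation would pick up uncontrolled boundary-curvature terms that could not be absorbed by the test function.
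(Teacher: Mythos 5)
The paper does not actually prove Proposition~4.1: it is stated as ``the classical Pogorelov estimate'' and the reader is referred to Corollary~1.1 of Caffarelli~\cite{CMap3}. So there is no in-paper argument to compare against line by line; the question is whether your reconstruction is sound.

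Your framework is the right one: differentiating $\log\det D^2u=0$ twice in the tangential direction to obtain $L(u_{11})\ge 0$, using a Pogorelov auxiliary function that vanishes on the lateral part of $\partial(S_1^c[u](0)\cap\Omega_1)$, handling the interior maximum by the classical computation, and smoothing $\Omega_1,\Omega_2$ as in Step~2 of Theorem~3.1 to justify the pointwise calculation up to the boundary. You also correctly single out the flat-boundary maximum as the crux. But that case is precisely where the proposal has a genuine gap. At a maximum $x_0\in\Sigma_f$, with $\Omega_1\subset\{x_2>0\}$ so the outward normal is $-e_2$, the Hopf/maximum condition gives $\partial_2\Phi(x_0)\le 0$ (the opposite sign to the one you wrote), and spelling out the first-order conditions on $\{x_2=0\}$ only yields
\[
\frac{u_{111}}{u_{11}}(x_0)=\alpha\frac{u_1}{1-u}-\mu x_1=O(1),
\qquad
\frac{u_{112}}{u_{11}}(x_0)\le \alpha\frac{u_2}{1-u}=O(1).
\]
These control third derivatives relative to $u_{11}$; they do not bound $u_{11}$ itself. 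Likewise, $u_{11}u_{22}-u_{12}^2=1$ combined with the $C^{1,\delta_0}$ bound on $\nabla u$ is perfectly compatible with arbitrarily large $u_{11}$ (just take $u_{22}$ small), and $C^{1,\delta_0}$ says nothing about second derivatives. So the ingredients you list do not ``pin down $u_{11}(x_0)\le C(\bar\delta)$.''

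What your sketch never uses is the second boundary condition itself: on $\Sigma_f$ the gradient $\nabla u$ stays on the convex curve $\partial\Omega_2$, which (after smoothing) gives an oblique relation such as $u_2=g(u_1)$, $g$ convex, hence $u_{12}=g'(u_1)u_{11}$ and $u_{112}=g''(u_1)u_{11}^2+g'(u_1)u_{111}$ on $\Sigma_f$. Plugging that into your two conditions at the maximum only gives $g''(u_1)\,u_{11}\le C$, which bounds $u_{11}$ solely when $g''$ has a lower bound --- a uniform convexity constant of the smoothed $\Omega_2$ that does \emph{not} depend only on $\bar\delta$ and therefore does not survive the limit. In other words, closing the flat-boundary case is subtler than a na\"{\i}ve Hopf-plus-tangential argument with a generic boundary relation, and the actual mechanism used in Caffarelli's Corollary~1.1 of \cite{CMap3} is not captured by your sketch. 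I would treat this case as unresolved and go back to that reference for the real argument.
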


Next proposition deals with the case when an eccentric section crosses the boundary transversally. The key observation is that the domain is straightening after normalization, and it appears in \cite{CMap3} as well.

\begin{prop}\label{p4.2}Suppose $(u,v)\in\mathcal{S}(\bar\delta)$ and for some angle $\theta>0$ we have
\begin{equation}\label{e41}
\Gamma(\EL(E_h), \theta) \cap S_h^c[u](0) \subset \Omega_1.
\end{equation} 
Given $M$ large, there is a constant $\eta_0$ (large) depending on $M,\theta,\bar\delta$ 
such that $$ \mbox{if $\eta_u(h) \ge \eta_0$ then} \quad \quad \eta_u(\frac 1 M h) \le C_0 \, \eta_u(h),$$
with $C_0=C_0(\bar \delta)$ a constant depending only on $\bar \delta$.
\end{prop}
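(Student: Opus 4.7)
\medskip\noindent\textbf{Proof plan.}
My plan is to argue by contradiction, combining the compactness of $\mathcal S(\bar\delta)$ with the Pogorelov estimate of Proposition~\ref{PE}. Suppose the claim fails; then one can find $M$ large, a sequence $(u_n,v_n)\in\mathcal S(\bar\delta)$ satisfying \eqref{e41}, and heights $h_n\in(0,1]$ with $\eta_n:=\eta_{u_n}(h_n)\to\infty$ but $\eta_{u_n}(h_n/M)/\eta_n\to\infty$. Applying the identity \eqref{e20} to the renormalized pair $(\tilde u_n,\tilde v_n):=((u_n)_{h_n},(v_n)_{h_n})\in\mathcal S(\bar\delta)$ gives
$$\frac{\eta_{u_n}(h_n/M)}{\eta_n}\le \eta_{\tilde u_n}(1/M),$$
so it suffices to bound $\eta_{\tilde u_n}(1/M)$ uniformly in $n$.

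Next I would analyze the geometry in the renormalized coordinates. Rotating so that $\EL(E_{h_n})=e_1$, the renormalizing affine map compresses the $e_1$ direction by a factor $\eta_n^{-1/2}$ and stretches the $e_2$ direction by $\eta_n^{1/2}$. A direct computation shows that the image of the cone $\Gamma(e_1,\theta)$ has half-opening $\arctan(\eta_n\tan\theta)\to\pi/2$, so its complement inside $S_1^c[\tilde u_n](0)$ collapses to the half $\{x_1\le 0\}$ together with a vanishing sector around the positive $x_2$-axis. The same widening forces the left and right tangent rays of $\tilde\Omega_{1,n}$ at $0$ to rotate toward $\pm e_2$. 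Since $\partial\tilde\Omega_{1,n}\cap S_1^c[\tilde u_n](0)$ must lie in the complement of this cone, pass through $0$, and have those tangent rays at $0$, convexity forces this boundary piece to concentrate in Hausdorff sense on the vertical segment $\{x_1=0\}\cap S_1^c[\tilde u_n](0)$, with $\tilde\Omega_{1,n}\cap S_1^c[\tilde u_n](0)$ approaching the right half.

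By the local $C^{1,\delta_0}$ compactness result of Section~2, a subsequence $(\tilde u_n,\tilde v_n)$ converges locally uniformly to some $(u_\infty,v_\infty)\in\mathcal S(\bar\delta)$, centered sections converge, and the limit satisfies $\Omega_{1,\infty}\cap S_1^c[u_\infty](0)=\{x_1\ge 0\}\cap S_1^c[u_\infty](0)$ with flat boundary on the line $\{x_1=0\}$. Swapping the two coordinate axes, Proposition~\ref{PE} gives $u_{\infty,22}\le C$ in $B_c$, with $C,c$ depending only on $\bar\delta$. Using $u_\infty(0)=|\nabla u_\infty(0)|=0$ this yields $u_\infty(0,s)\le\tfrac12 Cs^2$ for $|s|\le c$, hence $S_t[u_\infty](0)$ contains the segment $\{(0,s):|s|\le\sqrt{2t/C}\}$ for $t\le c^2/(2C)$. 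Combined with the universal volume bound of Proposition~\ref{p2.4}, this pins the $e_1$-extent of the section to $O(\sqrt t)$ as well, giving $\eta_{u_\infty}(t)\le C_0(\bar\delta)$ for every $t\le t_0(\bar\delta)$. Choosing $M$ with $1/M\le t_0$ and using convergence of sections, $\eta_{\tilde u_n}(1/M)\to\eta_{u_\infty}(1/M)\le C_0$, contradicting $\eta_{\tilde u_n}(1/M)\to\infty$. Moderate values of $M$ with $1/M>t_0$ are handled by enlarging $C_0$.

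The hardest part is making the geometric limit fully rigorous: verifying that the complement of the widening cone, which in principle contains the entire left half-section as well as a thin sector on the right, really forces $\partial\tilde\Omega_{1,n}\cap S_1^c[\tilde u_n](0)$ to concentrate in Hausdorff sense on the single line $\{x_1=0\}$, that both tangent rays of $\tilde\Omega_{1,n}$ at $0$ rotate all the way to $\pm e_2$, and that Proposition~\ref{PE} consequently applies cleanly to $u_\infty$ on the entire unit section. These facts follow from the widening of the cone together with Hausdorff convergence of convex sets, but some care is needed at the corner at $0$ and to confirm the limit stays in $\mathcal S(\bar\delta)$.
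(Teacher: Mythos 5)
Your overall strategy---renormalize to put $(\tilde u_n,\tilde v_n)\in\mathcal S(\bar\delta)$, use compactness, flatten the boundary via the widening cone, and apply the Pogorelov estimate Proposition~\ref{PE} to the limit---is exactly the paper's. However, the final step contains a genuine gap, and it is precisely where the two arguments diverge.

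You route the conclusion through the submultiplicative inequality \eqref{e20}, so you need to bound the \emph{full eccentricity} $\eta_{\tilde u_n}(1/M)$. Your argument for this is: the Pogorelov bound $u_{\infty,22}\le C$ gives a segment of tangential length $\gtrsim\sqrt t$ in $S_t[u_\infty](0)$, and combined with $|S_t^c|\lesssim t$ this pins the \emph{normal} extent to $\lesssim\sqrt t$. This is correct as far as it goes, but it does not bound the eccentricity. A long, thin ellipse $E_t$ with long axis in the \emph{tangential} direction is perfectly consistent with both facts: a tangential segment of length $c\sqrt t$, volume $\sim t$, and normal extent $\lesssim\sqrt t$ are all compatible with $\Lambda\gg\sqrt t$, $\lambda\ll\sqrt t$. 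The Pogorelov estimate is one-sided ($u_{22}\le C$, hence tangential extent $\ge c\sqrt t$); it does not give $u(0,s)\ge cs^2$, so nothing you wrote caps the tangential extent of the section from above. Consequently the claim ``$\eta_{u_\infty}(t)\le C_0(\bar\delta)$ for all $t\le t_0$'' is not established, and the contradiction does not close.

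The paper avoids needing any bound on $\eta(E_{1/M}(u_h))$ at all. It works directly with the composition $A_{1/M}(u_h)A_h(u)$ and uses only the \emph{lower} bound on the tangential radius of $E_{1/M}$ that Pogorelov actually provides: writing (in the paper's axes) $A_{1/M}(u_h)e_1=d^{-1}e_1$ with $d\ge c_0(\bar\delta)$, one has $\|A_{1/M}A_h\,e_1\|=\sqrt{\eta}/d\le \sqrt{\eta}/c_0$, while for the orthogonal direction $A_h$ already contracts by $1/\sqrt\eta$, so the crude bound $\|A_{1/M}(u_h)\|\le C(M,\bar\delta)$ (which follows from the trivial estimate $\eta_w(t)\le \bar\delta^{-2}t^{-1}$ for $w\in\mathcal S(\bar\delta)$) gives $\|A_{1/M}A_h\,e_2\|\le C(M)/\sqrt\eta$. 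For $\eta\ge\eta_0(M,\bar\delta)$ the first term dominates, yielding $\eta_u(h/M)=\|A_{1/M}A_h\|^2\le 2c_0^{-2}\eta_u(h)$ with $C_0=2c_0^{-2}$ independent of $M$. To repair your proof along your own lines you would need to also bound the tangential extent from above, e.g.\ by deriving $u_{\infty,11}\ge 1/C$ from $\det D^2 u_\infty=1$ \emph{and} an additional Pogorelov bound on the dual $v_\infty$ in the appropriate direction, or else switch to the paper's direct norm computation.
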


\begin{proof}

Up to a rotation, $\EL(E_h)$ lies on the $x_2$-axis and that the positive direction is pointing inside $\Omega_1$. Denote this direction by $e_2$. 

Write $\eta=\eta_u(h)$ for simplicity, and let $u_h$ be the rescaling of $u$ which normalizes $E_h$ into $B_1$ obtained as in Section 2 by the affine transformation
$$ A_h(u)=\begin{pmatrix}\sqrt{\eta} & 0\\0&\frac{1}{\sqrt{\eta}}\end{pmatrix}.$$ 
The inclusion \eqref{e41} implies
$$\Gamma(e_2, \tilde \theta) \cap S_1^c[u_h](0) \subset \Omega_1(u_h),$$
with $\tilde{\theta}$ the angle with $$\tan \tilde{\theta}= \eta \, \tan \theta.$$ 

The proof follows by compactness. As $\eta \to \infty$ we have $\tilde \theta \to \pi/2$, and by the compactness of the family $\mathcal S(\delta)$, the rescalings $u_h$ must converge locally uniformly (up to subsequences) to a limiting function $\bar u$ which satisfies the hypothesis of Proposition \ref{PE}. 

Since $\bar u (x_1,0) \le C x_1^2$ we conclude that the ellipsoid $E_{1/M}(u_h)$ intersects the $x_1$ axis on a segment of length $ 2 d M^{-1/2}$ with $d \ge c_0$ for some constant $c_0$ depending only on $\bar \delta$. This means that we can renormalize this ellipsoid of $u_h$ to $B_1$ by using an affine transformation $A_{1/M}(u_h)$ with
$$A_{1/M}(u_h) e_1 = d ^{-1} \, \, e_1 \quad \quad \mbox{with $d \ge c_0$}.$$  
Now we can also use the fact that $\|A_{1/M}(u_h)\| \le C(M)$ to conclude (see \eqref{e20}) that
$$\eta (\frac 1M h) =\|A_{1/M}(u_h) \, A_h(u) \| ^2 \le 2 c_0^{-2}  \, \eta=: C_0 \, \eta,$$
for all large $\eta$.

\end{proof}

The following lemma deals with the critical geometry where the sections are `tangent' to the domains, which can occur near corners. See Figure 3.
\begin{lem}\label{l42}Suppose for some $(u,v)\in\mathcal{S}(\bar\delta)$ we are in the following situation:
\begin{enumerate}
\item{$\Omega_1 \subset\{x_1<0, x_2>0\}$, and $\Omega_2 \subset \{y_1<0, y_2>0\}$;}
\item{$S^c_1[u](0)\cap \{x_1=0, x_2\ge 0\}\subset\partial\Omega_1 \text{ and } S^c_1[v](0)\cap \{y_1\le 0, y_2=0\}\subset\partial\Omega_2$.}
\end{enumerate}

Then there is a constant $C$, depending only on $\bar\delta$, such that $$u(0,x_2)\le C(x_2)_+^2 \text{ in $S_1^c[u](0)$}.$$\end{lem}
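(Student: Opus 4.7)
I would prove the above statement by a maximum-principle argument for the linearized Monge-Amp\`ere operator $L$ on the region $D := S_1^c[u](0) \cap \Omega_1$, following the strategy of Steps 1--2 in the proof of Theorem 3.1. The target estimate is equivalent to $\partial_{x_2}u(0,x_2) \le C x_2$ on the positive $x_2$-axis segment of $\partial\Omega_1$ (via integration from $0$), so the natural barrier is $w := \partial_{x_2}u - C x_2$, which satisfies $Lw = 0$.

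First, I would pin down the boundary transport using obliqueness. By hypothesis $R_{\Omega_1}(0) = e_2$ and $L_{\Omega_2}(0) = -e_1$; write $f := L_{\Omega_1}(0)$ and $g := R_{\Omega_2}(0)$ for the remaining tangents, both in the closed second quadrant. Theorem 3.1 forces $f_1 < 0$ and $g_2 > 0$. Strict convexity of $u$ on $\overline{\Omega_1}$ forbids $u$ from being linear on a boundary segment and thereby rules out a cross-transport: $\nabla u$ must send the $e_2$-edge of $\partial\Omega_1$ into the $g$-edge of $\partial\Omega_2$ and the $f$-edge into the $-e_1$-edge.

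Next I would examine $w$ on $\partial D$. On the $f$-edge, $\nabla u$ is directed along $-e_1$, so $\partial_{x_2}u = 0$ and $w \le 0$. On the $e_2$-edge, parameterize $\nabla u(0,x_2) = \lambda(x_2) g$ with $\lambda$ strictly increasing by strict convexity; assuming $C^2$-regularity up to this boundary (justified by approximating $\Omega_1,\Omega_2$ locally by smooth domains near any candidate maximum point, as in Step 2 of the proof of Theorem 3.1), I compute $u_{12}(0,x_2) = \lambda'(x_2) g_1 \le 0$, so $\partial_{x_1}w \le 0$ along this edge. Since the outward normal to $D$ here is $+e_1$, at a hypothetical positive interior maximum on this edge Hopf's lemma gives $\partial_{x_1}w > 0$ strictly, a contradiction. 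The corner $(0,0)$ is safe since $w(0,0)=0$, and at the upper endpoint where the $e_2$-edge meets $\partial S_1^c[u](0)$, the $C^{1,\delta_0}$-estimate gives $|\partial_{x_2}u| \le M(\bar\delta)$, so $C = C(\bar\delta)$ large enough yields $w \le 0$.

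The main obstacle is controlling $w$ on the ``tip'' $\partial S_1^c[u](0) \cap \overline{\Omega_1}$ near the point where it meets the $f$-edge. At that corner, $\partial_{x_2}u = 0$ (inherited from the $f$-edge), so $w = 0$, but moving onto the tip the mere $C^{1,\delta_0}$-continuity only gives $|\partial_{x_2}u| \le C\,(\mathrm{dist})^{\delta_0}$, which can exceed $C x_2$ in a small H\"older-narrow neighborhood when $\delta_0 < 1$. I expect this to be handled by a careful modification of the barrier---for example, adding a linear compensator $\tilde C x_1$ with $\tilde C = \tilde C(\bar\delta)$ chosen to dominate the H\"older defect on the tip (recall $\partial S_1^c[u](0)$ lies at distance $\ge \bar\delta$ from $0$), while keeping the sign of $\partial_{x_1}w$ on the $e_2$-edge compatible with a Hopf-type contradiction (readily in the non-degenerate case $g_1 < 0$; the critical case $g = e_2$ would need either a perturbation of the data or a duality argument with $v$). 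Once $w \le 0$ on $\overline D$ is established, integrating $\partial_{x_2}u \le C x_2$ along the $x_2$-axis gives $u(0,x_2) \le \tfrac{C}{2} x_2^2$, and the $(x_2)_+^2$ form is automatic since $u \equiv 0$ on $\{x_2 \le 0\}$ by the convex extension and $u(0) = |\nabla u(0)| = 0$.
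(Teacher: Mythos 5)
Your overall strategy---the barrier $w=u_2-Cx_2$ and the maximum principle for the linearized \MA operator, with the boundary transport structure obtained from obliqueness---is the same as the paper's, and your observations that $f_1<0$, $g_2>0$, that the $f$-edge maps into $\{y_2=0\}$ so $u_2=0$ and $w\le 0$ there, and that $u_{12}\le 0$ on the $e_2$-edge, are all correct (although the parameterization $\nabla u(0,x_2)=\lambda(x_2)g$ is not literally right, since the image is a convex arc of $\partial\Omega_2$ tangent to $g$ only at the origin, the inequality $u_{12}\le 0$ survives because the arc curves into $\{y_1\le 0\}$). You also correctly locate the obstruction: the boundary piece of $D$ where $\partial S_1^c[u](0)$ meets the $f$-edge, where both $u_2$ and $x_2$ vanish and H\"older continuity alone gives no linear comparison.

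The fix you propose, however, does not work. Adding $\tilde C x_1$ to the barrier adds $\tilde C>0$ to $\partial_{x_1}w$ on the $e_2$-edge; since $+e_1$ is the outward normal there, a positive boundary maximum at a point $x_0\in\{x_1=0\}$ forces $\partial_{x_1}w(x_0)\ge 0$, and the only tool to rule it out is the sign $\partial_{x_1}w\le 0$ combined with Hopf. With the compensator, the sign becomes $u_{12}+\tilde C$, which can easily be positive (in fact $u_{12}$ may be identically $0$ if $g=e_2$), so the maximum principle argument collapses precisely on the edge you need it most. You flag this yourself for the critical case $g=e_2$, but it is a genuine gap in all cases, and "a perturbation of the data or a duality argument with $v$" is not carried out. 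The paper avoids the tip entirely by choosing the smaller region $D=\Omega_1\cap\{x_1>-\delta,x_2<\delta\}$ with $\delta=\delta(\bar\delta)$ small enough that $D\subset S_1^c[u](0)$, and then handles the one problematic corner $x_0=(-\delta,b)\in\partial\Omega_1$ (where $u_2(x_0)=0$ and $x_2=b$ may be small) by a genuinely new step: since $\nabla u(\partial\Omega_1\cap\{-\delta<x_1<0\})\subset\{y_2=0\}$, the density $\det D^2v$ is independent of $y_1$ near $y_0=\nabla u(x_0)$, so Pogorelov's estimate gives $v_{11}\le C(\bar\delta)$ near $y_0$, and duality transfers this to $u_{22}\le C(\bar\delta)$ near $x_0$; integrating from $u_2(x_0)=0$ then gives $u_2\le Cx_2$ on $\{x_1=-\delta\}$ near $x_0$, and the rest of the straight cuts are handled by the Lipschitz bound on $\nabla u$. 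This Pogorelov-plus-duality step is the key ingredient missing from your argument.
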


\begin{figure}[h]
\begin{center}
\includegraphics[totalheight=4cm]{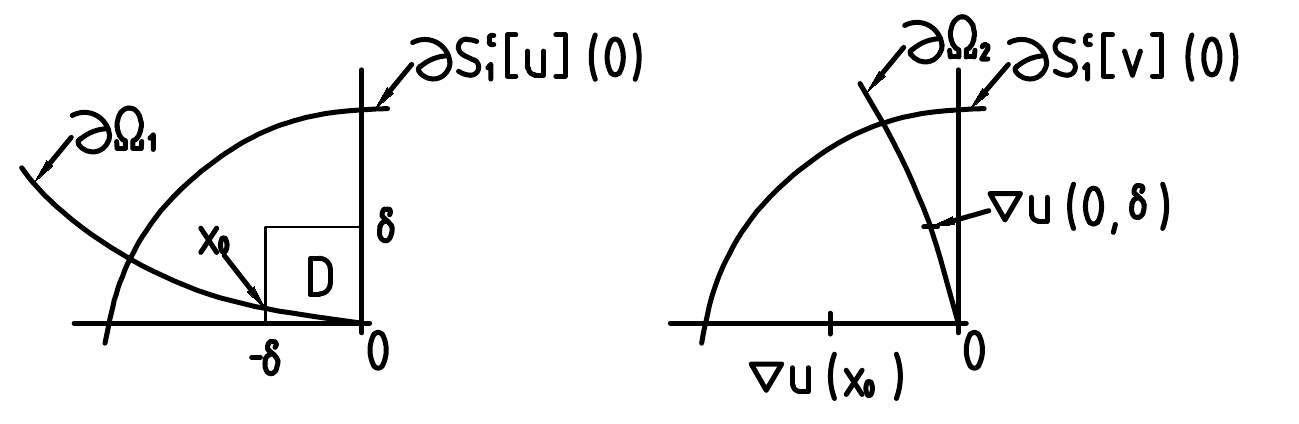}
\caption{}
\end{center}
\end{figure}
\begin{proof}We give the proof under the assumption that $\partial \Omega_1$ and $ \partial \Omega_2$ are smooth away from $0$ in  $S_1^c[u](0)$ and $S_1^c[v](0)$, and their potentials are $C^2$ up to the boundary, except at the origin. The general result follows from an approximation procedure similar to Step 2 as in the proof of Theorem 3.1.

With the geometry described in (1) and (2), the uniform $C^{1,\delta_0}$ estimates for $\mathcal{S}(\bar\delta)$ gives a small $\delta>0$, depending only on $\bar\delta$, such that 
\begin{equation}\label{e43}\nabla u(\{0<x_2<\delta,x_1=0\}) \text{ is a convex graph over the positive $y_2$-axis,}\end{equation}and
\begin{equation}\label{e44}\nabla u(\partial\Omega_1\cap\{-\delta<x_1<0\})\subset\{y_2=0\}.
\end{equation} By choosing $\delta$ smaller if necessary, we also have $$D:=\Omega_1\cap \{x_1>-\delta, x_2<\delta\}\subset S_1^c[u](0).$$

From \eqref{e43} one deduces that when moving upwards along $\{0<x_2<\delta,x_1=0\}$, the image under $\nabla u$ moves in the negative $y_1$-direction, which gives \begin{equation}\label{e45}
\DDY\DDX u\le0 \text{ on $\{0<x_2<\delta,x_1=0\}$}.
\end{equation} 

Meanwhile, if we denote by $x_0=(-\delta,b)\in\partial\Omega_1\cap\{x_1=-\delta\}\cap S_1^c[u](0)$ and $y_0=\nabla u(x_0)$, then $C^{1,\delta_0}$ regularity implies $y_0$ is bounded away from the origin by a small constant depending on $\bar\delta$. With \eqref{e44}, one has that in a neighborhood of size depending only on $\bar\delta$, $\det(D^2v)$ is independent of the $y_1$-variable. 

We can thus apply Pogorelov's estimate to get that $$v_{11}\le C(\bar\delta) \quad \mbox{in some $B_{r(\bar\delta)}(y_0)$.}$$ Duality then gives $u_{22}\le C(\bar\delta) \text{ in $B_{r'(\bar\delta)}(x_0)$}$ for some $r'(\bar\delta)$  depending on $\bar\delta.$ Since $u_2(x_0)=0$, one has \begin{equation*}
u_2 \le C(\bar\delta)x_2 \text{ on $\{x_1=-\delta,b<x_2<b+r'(\bar\delta)\}$}.
\end{equation*}

Since $\nabla u$ is uniformly bounded in $S_1^c[u](0)$ we can choose $C$ large, depending only on $\bar\delta$, such that 
$$u_2\le Cx_2 \quad \mbox{on} \quad \partial D \cap \left(\{x_1=-\delta\} \cup \{x_2= \delta\} \right)$$

Define $$w=u_2-Cx_2,$$ and the inequality above and \eqref{e45} lead to $$\begin{cases}Lw=0 &\text{ in $D$,}\\ w\le 0 &\text{ on $\{x_1=-\delta\}\cup\{x_2=\delta\}\cap\bar{D},$}\\ w\le 0 &\text{ on $\partial\Omega_1\cap\{-\delta<x_1<0\},$}\\ \DDX w\le0 &\text{ on $\partial\Omega_1\cap\{x_1=0\}$,}
\end{cases} $$where $L$ denotes the linearized \MA operator. 

Maximum principle then gives $$w\le 0 \text{ in $D$}.$$Convexity of $u$ leads to the desired estimate for positive $x_2$'s. Since $u$ is nondecreasing in the $x_2$ direction we obtain that $u=0$ on the negative $x_2$ axis, and the proof is finished. 

\end{proof}

With these preparations, we  give the main result of this section:
\begin{prop}\label{p4.3}
Suppose $(u,v)\in\mathcal{S}(\bar\delta)$. 

Given $M$ large, there are constants $\eta_0$ large and $h_0$ small, depending on $M$ and the pair $(u,v),$ such that for $h<h_0$, $$\text{either } 
\eta_u(h)\le \eta_0 \text{ or } \eta_u(\frac{1}{M}h)\le C_1 \eta_u(h),$$ 
with $C_1$ depending only on $\bar\delta$.\end{prop}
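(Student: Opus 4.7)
I argue by compactness and contradiction, aiming to reduce the statement to Proposition~\ref{p4.2} (transversal case) and Lemma~\ref{l42} (critical corner case).  Suppose the conclusion fails.  Then for every $\eta_0,h_0>0$ one finds $h<h_0$ with $\eta_u(h)>\eta_0$ and $\eta_u(h/M)>C_1\,\eta_u(h)$, which lets me extract a sequence $h_n\downarrow 0$ with $\eta_u(h_n)\to\infty$ and $\eta_u(h_n/M)/\eta_u(h_n)\to\infty$.  Consider the renormalizations $(u_n,v_n):=(u_{h_n},v_{h_n})\in\mathcal S(\bar\delta)$; by \eqref{e20},
$$\eta_{u_n}(1/M)\ \ge\ \frac{\eta_u(h_n/M)}{\eta_u(h_n)}\ \longrightarrow\ \infty,$$
and by the compactness of $\mathcal S(\bar\delta)$ I pass to a subsequence so that $(u_n,v_n)\to(\bar u,\bar v)\in\mathcal S(\bar\delta)$ locally uniformly and, after a rotation of coordinates, $\EL(E^u_{h_n})\to e_1$.

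\textbf{Limit geometry and easy cases.}  The renormalization $A_{h_n}$ has eigenvalues $\sqrt{\eta_u(h_n)}$ along $\ES$ and $1/\sqrt{\eta_u(h_n)}$ along $\EL$, so in the limit any tangent ray of $\Omega_1$ at $0$ with a nonzero $\ES$-component is rotated to $\pm e_2$, while rays along $\pm\EL$ are preserved.  This yields a dichotomy for $\Omega_1(\bar u)$ at $0$: either $\Omega_1(\bar u)$ contains a half-plane (when no tangent ray of $\Omega_1$ is asymptotically along $\pm\EL$), or $\Omega_1(\bar u)$ is a quarter-plane with edges along $\pm e_1,\pm e_2$.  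An analogous dichotomy holds for $\Omega_2(\bar v)$ with $\EL$ and $\ES$ swapped, since $E_h^\perp$ interchanges the axes.  If $\Omega_1(\bar u)$ contains a half-plane, then for all large $n$ the cone condition $\Gamma(\EL(E^u_{h_n}),\theta_0)\cap S^c_{h_n}[u](0)\subset\Omega_1$ of Proposition~\ref{p4.2} holds with some uniform $\theta_0>0$ determined by the limit geometry; since $\eta_u(h_n)\to\infty$ eventually passes the threshold $\eta_0(M,\theta_0,\bar\delta)$, Proposition~\ref{p4.2} delivers $\eta_u(h_n/M)\le C_0\,\eta_u(h_n)$, contradicting the choice of sequence once $C_1>C_0$.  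The case where $\Omega_2(\bar v)$ contains a half-plane is handled symmetrically, applying Proposition~\ref{p4.2} to $v$ and using that $\eta_v=\eta_u$ (dual ellipses share eccentricity).

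\textbf{Critical corner case.}  The remaining possibility is that both $\Omega_1(\bar u)$ and $\Omega_2(\bar v)$ are quarter-planes at $0$.  Theorem~3.1 applied to the limit $(\bar u,\bar v)\in\mathcal S(\bar\delta)$ forces corresponding left and right tangent rays to be non-perpendicular; a sign check then shows that the only admissible orientation is, up to a rotation of coordinates, the configuration of Lemma~\ref{l42}.  Lemma~\ref{l42} applied to $\bar u$, and the analogous application to the pair $(\bar v,\bar u)$ with swapped roles, yield one-sided quadratic bounds $\bar u(0,x_2)\le C(\bar\delta)x_2^2$ and $\bar v(0,y_2)\le C(\bar\delta)y_2^2$ along the respective edges.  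Combined with the area estimate $|S^c_h[\bar u](0)|\ge ch$ of Proposition~\ref{p2.4} and Legendre duality (which transfers $\bar v$'s bound into control along the edge of $\bar u$'s section orthogonal to the one Lemma~\ref{l42} sees directly), these force the eccentricity $\eta_{\bar u}(1/M)$ to be bounded by a constant $C_2=C_2(\bar\delta)$.  This contradicts $\eta_{u_n}(1/M)\to\infty$ as soon as $C_1>C_2$.  Choosing $C_1:=\max(C_0,C_2)$, a constant depending only on $\bar\delta$, completes the proof.

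\textbf{Main obstacle.}  The delicate step is the critical corner case: verifying that obliqueness of the limit pair selects exactly the configuration of Lemma~\ref{l42} among the various possible quarter-plane alignments, and converting Lemma~\ref{l42}'s one-sided quadratic bound (together with its dual counterpart) into a two-sided eccentricity bound via the area estimate.  A minor technicality is the continuity of $\eta_u$ under the compact convergence in $\mathcal S(\bar\delta)$, which follows from the normalizations built into Definition~2.5.
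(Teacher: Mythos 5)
Your overall strategy matches the paper's: argue by contradiction, extract a degenerating sequence $h_n$, renormalize, pass to a compact limit, and split into a transversal case (handled by Proposition~\ref{p4.2}) and a critical corner case (handled by Lemma~\ref{l42}), using obliqueness to select the right corner configuration.

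However, there is a genuine gap in the corner case. You claim that if a tangent ray of $\Omega_1$ is asymptotically along $\pm\EL(E_{h_n})$ then it is ``preserved'' by the renormalizations $A_{h_n}$, and conclude that $\Omega_1(\bar u)$ is a quarter-plane with edges along $\pm e_1,\pm e_2$. This is not correct. The directions $\EL(E_{h_n})$ vary with $n$, so the fixed ray $\LTO(0)$ is never exactly aligned with $\EL(E_{h_n})$; writing $\LTO(0)$ in the $(e_n, e_n^\perp)$ frame as $(\pm 1 + o(1), \epsilon_n)$ with $\epsilon_n\to 0$, the normalized ray has direction proportional to $(\pm \eta_n^{-1/2}, \epsilon_n\eta_n^{1/2})$, whose limit depends on the unknown rate $\epsilon_n\eta_n$. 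Thus $L=\lim A_n\LTO(0)/|A_n\LTO(0)|$ can be any direction in the relevant quadrant, not $\pm e_1$. The paper accounts for this: it shows only that one edge of $\partial\Omega_1^\infty$ lies on the $x_2$-axis near $0$, and then performs an additional affine map $A$ (fixing the $x_2$-axis) to bring $L$ onto the negative $x_1$-axis, paired with the dual map on $\Omega_2^\infty$. Without this step Lemma~\ref{l42} is not applicable. Relatedly, the crucial relation that makes a single such $A$ work simultaneously for $\Omega_1^\infty$ and $\Omega_2^\infty$ is $\LTO(0)\perp\RTT(0)$ for the \emph{original} domains, which the paper derives by applying obliqueness to $(u,v)$; your proposal invokes obliqueness only for the limit pair, which does not yield this perpendicularity.

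Two smaller points: the negation of the proposition gives $\eta_u(h_n/M)/\eta_u(h_n)>C_1$, not $\to\infty$ (the argument still runs with the weaker inequality once $C_1$ is chosen large, so this is a slip rather than a fatal flaw); and your dichotomy silently omits the case where $\EL(E_{h_n})$ lies outside the tangent cone of $\Omega_1$, which the paper rules out via the area condition of $\mathcal S(\bar\delta)$ — this should be addressed explicitly, since otherwise both tangent rays could collapse to the same ray and the ``half-plane'' alternative would fail.
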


\begin{proof}

Suppose the statement is false, and then we find  a sequence $h_n\to 0$ such that \begin{equation}\label{e46}
\eta_u(h_n)\ge n \text{ but }
\eta_u(\frac{1}{M}h_n) > C_1 \eta(h_n),
\end{equation} where $C_1$ is  to be chosen later depending only on $\bar\delta.$

For simplicity of notation we write $\eta_n$ for $\eta_u(h_n)$, and let $e_n$ be the unit direction on the line given by $\EL(E_{h_n})$ which makes an angle at most $\pi/2$ with the direction that bisects the tangent cone of $\Omega_1$ at the origin. We denote $e_n^\perp$ for the perpendicular direction corresponding to $\Omega_2$. 

Without loss of generality, assume $\omega(\LTO(0),e_n)\le\omega(\RTO(0),e_n)$ along this sequence.  We first show that 
\begin{equation}\label{e47}\lim_n \omega(\LTO(0),e_n)=0.\end{equation}

Otherwise we have some $\delta>0$ such that $\omega(\LTO(0),e_n)>\delta$ along the sequence.

There are two possibilities. 
In the first case, $e_n$ is {\it to the right} of $\LTO(0)$ with at least an angle $\delta$ between them. Since $\omega(\LTO(0),e_n)\le\omega(\RTO(0),e_n)$, we obtain $\Gamma(e_n,\delta)\subset \mathcal{K}$, where  $\mathcal{K}$ is the tangent cone of $\Omega_1$ at $0$. Hence Proposition \ref{p4.2} applies with $\theta=\delta/2$ and $h=h_n$ sufficiently small. This contradicts \eqref{e46} if we choose $C_1$ to be larger than the constant $C_0$ in Proposition \ref{p4.2}.

In the second case $e_n$ is {\it to the left of} $\LTO(0)$ with at least an angle $\delta$ between them. Together with $\omega(\LTO(0),e_n)\le\omega(\RTO(0),e_n)$, we contradict the fact that $\bar \delta^{-1} E_{h_n}$ intersects $\Omega_1 \subset \mathcal K$ in a set of area comparable to  the area of $E_{h_n}$, as the eccentricity $\eta_n \to \infty$. In conclusion the claim \eqref{e47} is proved. 

Similar argument applied to $\Omega_2$ gives that either $\lim_n \omega(\LTT(0),e_n^\perp) =0$ or $\lim_n \omega(\RTT(0),e_n^\perp) =0$.  However, obliqueness dictates $$\omega(\LTO(0),\LTT(0))\le\pi/2-\omega_0$$ for some $\omega_0>0$, ruling out the first case.  Consequently, \begin{equation*}\lim_n\omega(\RTT(0),e_n^\perp)=0,\end{equation*} and we conclude \begin{equation}\label{e48}\LTO(0)\perp\RTT(0).\end{equation}

Up to a rotation, we may assume that $\LTO(0)$ lies on the negative $x_1$-axis and $\RTT(0)$ lies on the positive $y_2$-axis. 

Let $A_n$ be the corresponding affine transformation that normalizes $E_{h_n}$ to $B_1$, with eigenvalues $\frac{1}{\sqrt{\eta_n}}$ along $e_n$ and  $\sqrt{\eta_n}$ along $e_n^\perp$ respectively.

Also, denote the normalized solutions by $(u_n,v_n)\in\mathcal{S}(\bar\delta)$ with $\Omega_1^n$ and $\Omega_2^n$ as their corresponding domains. 
Up to a subsequence, they converge to $(u_\infty,v_\infty)\in\mathcal{S}(\bar\delta)$ with domains $\Omega_1^\infty$ and $\Omega_2^\infty.$

Since the angle between $\RTO(0)$ and the positive $x_2$-axis is less than $\pi/2-\omega_0$ for some $\omega_0>0$ and $e_n$ is converging to the negative $x_1$-direction, we find that $R_{\Omega_1^n}$ which points in the direction of $A_n \RTO(0)$ converges $e_2$. Together with the definition of right tangents and $h_n\to 0$, this implies  $$S_1^c[u_\infty](0)\cap\{x_1=0, x_2\ge 0\}\subset\partial\Omega_1^\infty.$$
Similarly, we obtain $$S_1^c[v_\infty](0)\cap\{x_1\le 0, x_2=0\}\subset\partial\Omega_2^\infty.$$

On the other hand, relation \eqref{e48} implies $L_{\Omega_1^n}(0)\perp R_{\Omega_2^n}(0)$ for all $n$. If we denote by $L$ and $R$ the limits of these tangent rays, then $L\perp R$. Also, $\Omega_1^\infty$ is contained between $L$ and the positive $x_2$-axis, $\Omega_2^\infty$ is contained between the negative $y_1$-axis and $R.$  

After an affine transformation $A$ that fixes the $x_2$-axis, we can assume $L$ lies on the negative $x_1$-axis. The dual of $A$ will map $R$ to the positive $y_2$-axis while leaving the $y_1$-axis invariant.  Then $A\Omega_1^\infty$ and $(A^T)^{-1}\Omega_2^\infty$ have the geometry described in Lemma \ref{l42}.  Also, the norms of $A$ and $A^{-1}$ are bounded by a constant depending only on $\bar\delta,$ hence the corresponding potentials belong to a class $\mathcal S(\bar \delta')$ with $\bar \delta'$ depending on $\bar \delta$. By Lemma \ref{l42} we find
$$u_\infty(0,x_2)\le C(\bar \delta)(x_2)_+^2 \text{ in $S_1^c[u_\infty](0)$}.$$ Now we proceed as at the end of the proof of Proposition \ref{p4.2}. The inequality above implies that there is a corresponding affine transformation $A_{1/M}(u_n)$ with
$$ A_{1/M}(u_n) e_2 = d^{-1} e_2, \quad \mbox{with} \quad d \ge c_0(\bar \delta).$$
Since $\|A_{1/M}(u_n)\| \le C(M)$ we find
 $$\eta(\frac 1 M h_n) = \| A_{1/M}(u_n)  \,  A_n \|^2 \le 2 c_0^{-2} \eta_n,$$
for all large $n$, and we reach a contradiction provided that we choose $C_1$ sufficiently large.

\end{proof}

\section{Proof of the main result}In this section we prove the main result. We start with some consequences of Proposition \ref{p4.3}.
\begin{lem}\label{l5.1}
For $(u,v)\in\mathcal{S}(\bar\delta)$ and $\eps>0$, there is a positive constant $h_1$, depending on $\bar\delta,\eps$ as well as the pair $(u,v)$, such that $$\eta_u(h)\le\frac{1}{2}h^{-\eps}$$ whenever $h<h_1.$ 
\end{lem}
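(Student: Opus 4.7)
My plan is to iterate Proposition~\ref{p4.3} along a geometric sequence of scales, choosing the contraction factor $M$ so large that the per-step growth $C_1$ is overwhelmed by the target rate $M^\eps$. First I fix $M=M(\bar\delta,\eps)$ with $C_1\le M^{\eps/2}$, where $C_1=C_1(\bar\delta)$ is the constant of Proposition~\ref{p4.3}, and apply that proposition with this $M$ to obtain constants $\eta_0$ and $h_0>0$ (depending on $\bar\delta,\eps,(u,v)$) for which the dichotomy holds at every $h<h_0$.

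The only nontrivial additional input I need is a universal one-step growth bound:
$$\eta_u(h/M)\le K\,\eta_u(h)\qquad\forall\,(u,v)\in\mathcal S(\bar\delta),\;\forall\,h\in(0,1],$$
with $K=K(\bar\delta,M)$. Equivalently, $\eta_{u_h}(1/M)\le K$ uniformly on the family. This should follow purely from the geometric data built into $\mathcal S(\bar\delta)$: iterating the engulfing property of centred sections a bounded number of times forces $S_{1/M}^c[u](0)\subset R\,B_1$ with $R=R(\bar\delta,M)$; combining this with the area bound $|S_{1/M}^c[u](0)|\asymp 1/M$ from Proposition~\ref{p2.4} and the ellipse comparison in Proposition~\ref{p2.5} then pins down the eccentricity. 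A similar argument yields the interpolated version $\eta_u(th)\le K\,\eta_u(h)$ for every $t\in[1/M,1]$.

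With these two ingredients, for every $h<h_0$,
$$\eta_u(h/M)\le \max\bigl(C_1\,\eta_u(h),\,K\eta_0\bigr),$$
because the first alternative of Proposition~\ref{p4.3} is handled by the universal growth bound, while the second is direct. Writing $h_k:=h_0/M^k$ and $a_k:=\eta_u(h_k)$, induction gives $a_k\le C_1^k\,\max(a_0,K\eta_0)$, so
$$a_k\,h_k^\eps\le (C_1/M^\eps)^k\,\max(a_0,K\eta_0)\,h_0^\eps,$$
which tends to $0$ geometrically since $C_1/M^\eps\le M^{-\eps/2}<1$. Hence $a_k\le \frac{1}{2K}\,h_k^{-\eps}$ for every $k\ge K_0=K_0(\bar\delta,\eps,(u,v))$.

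Finally, for $h\in(0,h_{K_0}]$ I pick the unique $k\ge K_0$ with $h\in[h_{k+1},h_k]$ and apply the interpolated universal growth bound with ratio $h/h_k\in[1/M,1]$:
$$\eta_u(h)\le K\,\eta_u(h_k)\le \tfrac12\,h_k^{-\eps}\le \tfrac12\,h^{-\eps}.$$
Setting $h_1:=h_{K_0}$ completes the proof. The main obstacle is the universal one-step growth bound, since Proposition~\ref{p4.3} alone provides no control of $\eta_u(h/M)$ when its first alternative holds; an independent, purely geometric, argument is needed to rule out a sudden jump of eccentricity at such a transition scale. Once that estimate is in hand, the remaining iteration is elementary and calibrated exactly by the choice $C_1\le M^{\eps/2}$.
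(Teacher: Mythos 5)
Your proof is essentially the paper's: iterate Proposition~\ref{p4.3} down the geometric scales $M^{-k}h_0$ after fixing $M=M(\bar\delta,\eps)$ so large that $C_1\le M^{\eps/2}$, then interpolate to arbitrary $h$. The one-step growth bound $\eta_u(h/M)\le K(\bar\delta,M)\,\eta_u(h)$ that you flag as the potential gap is indeed needed to close the induction when the first alternative of Proposition~\ref{p4.3} holds (the paper elides it), but it follows immediately from \eqref{e20} together with the observation that for any pair in $\mathcal S(\bar\delta)$ the ellipsoid $E_{1/M}$ has volume $|B_1|/M$ and, by the polynomial modulus of strict convexity $S_t^c[u](0)\subset B_{Ct^{\eps_0}}$ established in the proof of Proposition 2.9, lies in a ball of radius $C(\bar\delta)$, so $\eta(E_{1/M})\le C(\bar\delta)\,M$ uniformly over the family.
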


\begin{proof}
An iteration of Proposition \ref{p4.3} gives that there exists a constant $K_0$ (depending on $(u,v)$ and $M$) such that
$$\eta_u(M^{-k}h_0)\le K_0 C_1^{k} \text{ for all natural numbers $k$},$$
which implies for general $h < 1$,   $$\eta_u(h)\le K_1 h^{-\log_MC_1}.$$

Since $C_1$ depends only on $\bar\delta$, we can choose $M=M(\bar \delta)$ large enough such that $\log_MC_1<\frac{1}{2}\eps$. 
The desired estimate follows by taking $h_1$ sufficiently small. 

\end{proof} 

Compactness of the family then implies that a uniform version of  Lemma \ref{l5.1} holds at some controlled scale:
\begin{lem}\label{l5.2}
Given $\eps>0$, there is a positive constant $h_0(\eps, \bar \delta)$, depending only on $\bar\delta$ and $\eps$, such that for any $(u,v)\in\mathcal{S}(\bar\delta)$, 
$$\eta_u(h)\le h^{-\eps}$$ for some $h\in [h_0,1/2]$.
\end{lem}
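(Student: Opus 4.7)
The plan is a contradiction argument that uses the compactness of the normalized family $\mathcal{S}(\bar\delta)$ to promote the pointwise-in-$(u,v)$ bound of Lemma \ref{l5.1} into a uniform bound at a controlled scale. Suppose the conclusion fails for some $\eps>0$; then one obtains a sequence $(u_n,v_n)\in\mathcal{S}(\bar\delta)$ and heights $h^{(n)}\downarrow 0$ satisfying
$$\eta_{u_n}(h)>h^{-\eps}\qquad\text{for every }h\in[h^{(n)},1/2].$$
By the compactness property of $\mathcal{S}(\bar\delta)$, I would extract a subsequence converging locally uniformly to a limiting pair $(u_\infty,v_\infty)\in\mathcal{S}(\bar\delta)$.

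Next, I would apply Lemma \ref{l5.1} to this single limit with the \emph{smaller} exponent $\eps/2$, producing a pair-dependent scale $h_1>0$ at which $\eta_{u_\infty}(h)\le \tfrac12 h^{-\eps/2}$ for all $h<h_1$. Fixing any $h^*\in(0,\min(h_1,1/2))$, for all $n$ large enough that $h^{(n)}<h^*$ the contradiction hypothesis yields $\eta_{u_n}(h^*)>(h^*)^{-\eps}$. The point is that the exponent gap between $\eps$ and $\eps/2$ at the single fixed height $h^*$ is meant to absorb the constant picked up when passing eccentricities to the limit, once $h^*$ is taken small enough in terms of $\eps$ and $\bar\delta$ only.

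The main obstacle I expect is precisely the comparison $\eta_{u_n}(h^*)\le C(\bar\delta)\,\eta_{u_\infty}(h^*)$ along the subsequence, which requires Hausdorff convergence of the centered sections $S^c_{h^*}[u_n](0)\to S^c_{h^*}[u_\infty](0)$. This in turn should follow from the locally uniform convergence of $u_n$ combined with the uniform strict convexity of elements of $\mathcal{S}(\bar\delta)$ (as used in the proof of the compactness proposition and in Remark \ref{r2.1}). Since the ellipses $E_h$ of Definition 2.5(3) are comparable to their sections up to the factor $\bar\delta$, their eccentricities are comparable in the limit. Choosing $h^*$ small enough that $\tfrac12 C(\bar\delta)(h^*)^{-\eps/2}<(h^*)^{-\eps}$ — a threshold depending only on $\eps$ and $\bar\delta$ — turns this comparison into a strict contradiction with $\eta_{u_n}(h^*)>(h^*)^{-\eps}$, closing the proof.
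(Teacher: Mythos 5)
Your proposal is correct and follows essentially the same contradiction-plus-compactness route as the paper: extract a subsequence, apply Lemma \ref{l5.1} to the limit, and derive a contradiction at a fixed intermediate height. The only difference is cosmetic but arguably cleaner: you invoke Lemma \ref{l5.1} with exponent $\eps/2$ to absorb a possibly $\bar\delta$-dependent constant in the comparison $\eta_{u_n}(h^*)\le C(\bar\delta)\,\eta_{u_\infty}(h^*)$, whereas the paper relies on the explicit factor $\tfrac12$ already built into Lemma \ref{l5.1} to absorb that loss with the same exponent $\eps$.
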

\begin{proof}
Suppose the statement is false, we find sequences $(u_n,v_n)\in\mathcal{S}(\bar\delta)$ and $h_n\to 0$ such that 
\begin{equation}\label{e50}\eta_{u_n}(h)> h^{-\eps} \text{ for all $h\in [h_n,1/2]$.}\end{equation}

Compactness of the family $\mathcal{S}(\bar\delta)$ implies that up to a subsequence, $(u_n,v_n)$ converges locally uniformly to some $(u,v)\in\mathcal{S}(\bar\delta).$ An application of Lemma \ref{l5.1} to $(u,v)$ gives a positive $h_1$ such that $$\eta_u(h_1)\le\frac{1}{2}h_1^{-\eps}.$$ 

Locally uniform convergence of $u_n\to u$ implies  $$\eta_{u_n}(h_1) \le h_1^{-\eps}$$ for large $n$. Note that $h_1\in [h_n,1/2]$ for all large $n$, and we contradict \eqref{e50}.

\end{proof} 

An iteration of Lemma \ref{l5.2} implies the estimate holds true for all $h$.
\begin{thm}\label{t5}
Given $\eps>0$, there is $C=C(\eps,\bar\delta)$ such that for $(u,v)\in\mathcal{S}(\bar\delta)$ $$\eta_u(h)\le Ch^{-\eps}$$ for all $h\le 1.$
\end{thm}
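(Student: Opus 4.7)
My plan is to iterate Lemma \ref{l5.2} using the affine invariance of the family $\mathcal S(\bar\delta)$ and the submultiplicativity property \eqref{e20}. The point is that Lemma \ref{l5.2} supplies, for every $(u,v)\in\mathcal S(\bar\delta)$, a single ``good'' scale in $[h_0,1/2]$ at which the desired eccentricity bound holds; since the renormalization $(u_h,v_h)$ also belongs to $\mathcal S(\bar\delta)$, we may apply the lemma again to $u_h$ and keep descending. The accumulated product of good scales will tile the interval $(0,1]$ with geometric rate, and \eqref{e20} will telescope the bound along this tiling.

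First I would record a preliminary uniform estimate at non-small heights: by the compactness of $\mathcal S(\bar\delta)$ (Proposition 2.7) and the fact that for $h$ bounded away from $0$ the sections $S_h^c[u](0)$ and their comparable ellipses depend continuously on $u$ in the topology of local uniform convergence, the map $(u,t)\mapsto \eta_u(t)$ is bounded on the compact set $\mathcal S(\bar\delta)\times[h_0,1]$ by some constant $K=K(\bar\delta,\eps)$, where $h_0=h_0(\eps,\bar\delta)$ is the height threshold produced by Lemma \ref{l5.2}. This will take care of all ``intermediate'' scales during the iteration.

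Next I would iterate: set $H_0=1$, and having defined $H_{k-1}$, apply Lemma \ref{l5.2} to $u_{H_{k-1}}\in\mathcal S(\bar\delta)$ to obtain $\tilde h_k\in[h_0,1/2]$ with $\eta_{u_{H_{k-1}}}(\tilde h_k)\le \tilde h_k^{-\eps}$, and put $H_k=H_{k-1}\tilde h_k$. Repeated application of \eqref{e20} gives
\[
\eta_u(H_k)\le \eta_u(H_{k-1})\,\eta_{u_{H_{k-1}}}(\tilde h_k)\le \tilde h_1^{-\eps}\cdots \tilde h_k^{-\eps}=H_k^{-\eps}.
\]
Since $\tilde h_k\le 1/2$, the sequence $H_k\to 0$, so the scales $\{H_k\}$ exhaust $(0,1]$ up to short gaps. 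For an arbitrary $h\in(0,H_1]$ choose $k$ with $H_{k+1}<h\le H_k$ and write $h=H_k t$ with $t\in[\tilde h_{k+1},1]\subset[h_0,1]$; then \eqref{e20} and the preliminary uniform estimate yield $\eta_u(h)\le \eta_u(H_k)\,\eta_{u_{H_k}}(t)\le K\,H_k^{-\eps}\le K\,h^{-\eps}$. For $h\in[H_1,1]$ the bound is immediate from the preliminary estimate since $h\ge h_0$. Taking $C=K$ completes the proof.

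The only non-routine point is the preliminary uniform estimate on $\mathcal S(\bar\delta)\times[h_0,1]$; granted compactness of the family and the stability of sections and their enclosing ellipses under local uniform convergence (both already used in Section 2), the rest is a bookkeeping iteration and a single interpolation step between consecutive $H_k$.
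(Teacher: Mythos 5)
Your argument is correct and follows essentially the same route as the paper: iterate Lemma \ref{l5.2} down through the scales $H_k$, telescope via \eqref{e20}, and handle the gaps between consecutive $H_k$ using a uniform bound on $\eta_u$ over $[h_0,1]$. The paper obtains that intermediate-scale bound directly from the Lipschitz continuity of $u$ (the short axis of $E_h$ is bounded below for $h\ge h_0$, while $S_h^c[u](0)\subset S_1^c[u](0)\subset\bar\delta^{-1}B_1$ bounds the long axis), rather than from a compactness/semicontinuity argument for $(u,t)\mapsto\eta_u(t)$, but this is a minor variation of the same step.
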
 

\begin{proof}
The proof follows from the inequality \eqref{e20}, Lemma \ref{l5.2} and the fact that 
$e_u(h) \le C(\eps, \bar \delta)$ if $h \in [h_0,1]$ with $h_0$ as in Lemma \ref{l5.2}, which is a consequence of the Lipschitz continuity of $u$.

\end{proof}

We finally give the proof of the main result.
\begin{proof}[Proof of Theorem 1.1]
For $y\in U$, let $h_y$ denote the smallest positive number such that $S^c_{h_y}[\psi](y)$ contacts $\partial U_1$, say, at $0$. By the engulfing property, there is a dimensional constant $\theta>1$ such that $$S^c_{\theta h_y}[\psi](0)\supset S^c_{h_y}[\psi](y).$$

Recall that $(\psi,\varphi) \in \mathcal{S}(\bar\delta)$ for some $\bar\delta$ that depends only on largest diameter of the two domains. 

Meanwhile, the bound on $\nabla \psi$, which only depends on the outer radius of $U_2$, implies $$h_y\le Cdist(y,\partial U_1).$$  Consequently, there is some $c>0$ depending only on the outer radius of $U_2$ such that $ \theta h_y<1$ whenever $dis(y,\partial U_1)<c$. 

For all such $y$, Theorem 5.1 and the inclusion above imply that the eccentricity of the section $S^c_{h_y}[\psi](y)$ is bounded by  
 $C(\eps', \bar \delta)h_y^{-\eps'}$ for some $\eps'$ to be chosen later. 
 
 Pogorelov's interior estimate, applied to $S^c_{h_y}[\psi](y)$, gives $$\|D^2\psi(y)\|\le Ch_y^{-\eps'}.$$

The uniform strict convexity of $\varphi$ implies $C |h_y|^{\delta_0}\ge |y|$, thus 
$$\|D^2\psi(y)\|\le C \, \, dist(y,\partial U_1)^{-\eps'/\delta_0}$$ whenever $dist(y,\partial U_1)<c$. We get the desired estimate by choosing $\eps':= \delta_0 \eps $.  

For points with $dist(y, \partial U_1)> c,$ we can apply the interior estimates.

\end{proof} 


\end{document}